\DeclareMathOperator{\Char}{Char}
\DeclareMathOperator{\dist}{dist}
\DeclareMathOperator{\sing}{sing}
\DeclareMathOperator{\Span}{span}
\newtheorem{theorem}{Theorem}
\newtheorem{proposition}{Proposition}
\newtheorem{corollary}{Corollary}
\newtheorem{definition}{Definition}
\newtheorem{example}{Example}
\newtheorem{remark}{Remark}
\begin{document}
%
\def\R {{\mathbb{R}}}
\def\N {{\mathbb{N}}}
\def\C {{\mathbb{C}}}
\def\Z {{\mathbb{Z}}}
\def\phi{\varphi}
\def\epsilon{\varepsilon}
\def\O{\Omega}
\def\bO{\overline{\O}}
\def\hp{{\rm hypo}}
\def\hps{\partial^{P,\infty}}
\def\ox{\overline{x}}
%
\def\tb#1{\|\kern -1.2pt | #1 \|\kern -1.2pt |} 
\def\Qed{\qed\par\medskip\noindent}
%
\title[The minimum time function with H\"ormander vector fields]{Regularity results for the minimum time function with H\"ormander vector fields} 
\thanks{This work was started during the ``Workshop on Hamilton-Jacobi equations''  held at Fudan University, Shanghai, China (July 24-30, 2016). The first author was supported by RFO grant, Universit\`a di Bologna, Italy. 
The second author was partly supported by the University of Rome Tor Vergata 
(Consolidate the Foundations 2015) and the Istituto Nazionale di Alta Matematica ``F. Severi'' (GNAMPA 2016 Research Projects). 
The third author gratefully acknowledges the financial support of the ``Vienna Graduate School on Computational Optimization'' which is funded by Austrian Science Foundation (FWF, project no. W1260-N35).}
\author{Paolo Albano} 
\address{Dipartimento di Matematica, 
Universit\`a di Bologna, Piazza
di Porta San Donato 5, 40127 Bologna, Italy} 
\email{paolo.albano@unibo.it}
\author{Piermarco Cannarsa}
\address{Dipartimento di Matematica, 
Universit\`a di Roma "Tor Vergata", Via della Ricerca Scientifica 1, 00133 Roma, Italy\footnote{Corresponding Author}}
\email{cannarsa@mat.uniroma2.it} 
\author{Teresa Scarinci}
\address{Department of Statistics and Operation Research, University of Vienna, Oskar-Morgenstern-Platz 1,
1090 Vienna, Austria}
\email{teresa.scarinci@gmail.com}
\date{\today}

\begin{abstract}
In a bounded domain of  $\R^n$ with smooth boundary, we study the regularity of the viscosity solution, $T$,
of the Dirichlet problem for the eikonal equation associated with a family of smooth vector fields $\{X_1,\ldots ,X_N\}$, subject to H\"ormander's 
bracket generating condition.  Due to the presence of characteristic boundary points, singular trajectories may
occur in this case. We characterize such trajectories as the closed set of all points at which the solution loses point-wise 
Lipschitz  continuity. We then prove that the local Lipschitz continuity of $T$, the local semiconcavity of $T$, and
the absence of singular trajectories are equivalent properties.  Finally, we show that the last condition is satisfied when the
characteristic set of $\{X_1,\ldots ,X_N\}$ is a symplectic manifold. We apply our results to Heisenberg's and Martinet's vector fields.
\end{abstract}

\subjclass[2010]{35F30, 35F21, 35D40} 
\keywords{eikonal equation; degenerate equations; sub-Riemannian geometry; semiconcavity}

\maketitle

\section{Introduction}
\setcounter{equation}{0}
\setcounter{theorem}{0}
\setcounter{proposition}{0}  
\setcounter{lemma}{0}
\setcounter{corollary}{0} 
\setcounter{definition}{0}
In a bounded open set $\Omega\subset\R^n$ with smooth boundary, $\Gamma$, we study the regularity of the viscosity solution, $T$,
to the Dirichlet problem 
 \begin{equation}
\label{intro:sosq}
\left\{
\begin{array}{l}
\sum_{j=1}^N (X_jT)^2(x)=1\quad \text{ in
}\quad \Omega 
\vspace{.2cm}
\\
T=0 \quad \text{ on
}\quad \Gamma ,
\end{array}
\right . 
\end{equation}
where $\{X_1,\ldots ,X_N\}$ is a system of smooth vector fields which satisfies
     H\"ormander's bracket generating condition (\cite{H}). 
 It is well known that such a solution may be interpreted as 
the sub-Riemannian distance from the boundary of $\Omega$ when $\{X_1,\ldots ,X_N\}$ span a distribution 
or, in control theory, as a certain minimum time function. 
While structural properties of the singular set and the regularity of the Riemannian distance function have been widely investigated (see, e.g., \cite{A0}, \cite{CMN},  \cite{CPS},  \cite{CSi1},  \cite{CSi2}), similar issues have not been much addressed for the solution of \eqref{eq:sosq} in the sub-Riemannian case or, more generally, when the quadratic form associated with the eikonal equation in \eqref{eq:sosq} fails to be positive definite (see, e.g., \cite{A1}).
The analysis of such questions is the main purpose  of this paper. 

In our approach, we will often  use  the characterization of the unique solution, $T$, of \eqref{eq:sosq} as the value function of the time optimal control problem with target $\Gamma$ and state equation
 \begin{equation}\label{intro:se}
 \begin{cases}
 y'(t)=\sum_{j=1}^N u_j(t)X_j(y(t))
 &
 (t\geq 0)
 \\
 y(0)=x,
 \end{cases}
\end{equation}
where $u:[0,+\infty[\to \overline{B}_1(0)$. In particular, a crucial role in our investigation will be played by the so-called {\em singular time-optimal trajectories} of \eqref{intro:se}, a well-known object in geometric control theory (see, e.g., \cite{CJT}). Singular trajectories are time-optimal trajectories that can be completed to a solution of the 
characteristic system associated with \eqref{intro:sosq}, which belongs to the {\em characteristic set}  of $\{X_1,\ldots ,X_N\}$  and satisfies a suitable transversality condition (see Definition~\ref{optimal_triple} below). Equivalently, singular time-optimal trajectories can be identified as those time-optimal trajectories that hit $\Gamma$ at a {\em characteristic point}, that is, a point at which $X_1,\ldots ,X_N$ are tangent to $\Gamma$ (see Theorem~\ref{t:sincar} below).

The fact that the existence of singular trajectories may destroy the smoothness (subanalyticity) of a solution of a first order Hamilton-Jacobi equation
was already observed in \cite{S}, \cite{Ag}, and \cite{Tre} (see also Remark~\ref{re:Sect4} below). In this paper,  we interested in a more basic smoothness threshold---namely Lipschitz continuity---which, as we explain below, opens the way to higher regularity properties for the solution of \eqref{eq:sosq}. 
Therefore, we begin our analysis by giving a {\em necessary and sufficient condition} for point-wise Lipschitz continuity, showing that $T$ fails to be Lipschitz continuous at a point $x\in \O$  if and only if $x$ is the starting point of a singular time-optimal trajectory (Theorem~\ref{t:nls}). By dynamic programming, this result implies that the set of all singular time-optimal trajectories consists---as a point set---of all the points of $\bO$ at which $T$ fails to be Lipschitz (Corollary~\ref{t:lipschitz-no-sing}).  

We note that point-wise Lipschitz continuity is a very weak property. Nevertheless, Theorem~\ref{t:nls} can be used to derive a classical Lipschitz regularity result whenever one can exclude the presence of singular time-optimal trajectories. This is the viewpoint leading to Theorem~\ref{t:semiconcave}, which ensures the equivalence of  the following  three properties: 
\begin{itemize}
\item[(a)] system \eqref{intro:se} admits no singular time-optimal trajectory;
\item[(b)] $T$ locally is semiconcave in $\Omega$;
\item[(c)] $T$ is locally Lipschitz in $\Omega$.
\end{itemize}
We observe that, in \cite{CR}, the local semiconcavity of the sub-Riemannian distance to a point $x_0$, $d_{SR}(x_0,\cdot)$, is proved assuming the absence of singular time-optimal trajectories, but without giving any  estimate of the dependence on $x_0$ of the semiconcavity constant of $d_{SR}(x_0,\cdot)$. Thus, the local semiconcavity of the minimum time function for a {\em general} target (i.e., (a) $\Rightarrow$ (b)) is not a direct consequence of the result of \cite{CR}, even when  $\{X_1,\ldots ,X_N\}$ generates a distribution.

We conclude the introduction by quickly mentioning further results that complete our exposition, while outlining the structure of this paper. 
In Section~\ref{Sect:vfmt}, we recall all required notions concerning H\"ormander vector fields, characteristic sets,  time optimal control problems, and  viscosity solutions of  related eikonal equations.
In Section~\ref{Sect:stot}, we introduce singular time-optimal trajectories and develop our analysis of the behaviour of $T$ along such trajectories.
Based on these results, in Section~\ref{Sect:reg}, we obtain the aforementioned characterization of the local Lipschitz regularity of $T$, we give sufficient conditions on the characteristic set of $\{X_1,\ldots ,X_N\}$  in order to exclude the presence of singular trajectories, and we discuss the examples of Heisenberg's and Martinet's vector fields as applications of our results. We conclude the paper with Appendix~\ref{a:b}, where we prove a regularity result for $T$  at the boundary of $\O$  (Theorem~\ref{t:bregularity}), which might be known to the reader in other forms than the one which fits the analysis of this paper.

\section{H\"ormander vector fields and minimum time function}\label{Sect:vfmt}
Let $\Omega \subset\R^n$ be a bounded open  set and assume that the
boundary of $\Omega$, $\Gamma$, is a smooth manifold of dimension   
$n-1$. 

We denote by $VF(\O)$ the  space of all $C^\infty$ vector fields on $\O$.
Given any two such vector fields,
\begin{equation*}
X(x)=\sum_{i=1}^nf_i(x)\partial_{x_i}\quad\mbox{and}\quad Y(x)=\sum_{i=1}^ng_i(x)\partial_{x_i}\qquad(x\in\O),
\end{equation*}
with $f_i,g_i\in C^\infty(\O)\,(i=1,\dots,n)$,  we denote by $[X,Y]$ the Lie bracket
\begin{equation*}
[X,Y](x)=\sum_{i=1}^nh_i(x)\partial_{x_i}\quad\mbox{where}\quad h_i=\sum_{j=1}^n\Big(f_j\partial_{x_j}g_i-g_j\partial_{x_j}f_i\Big).
\end{equation*}
Let  $N\geq 2$ be an integer and let $X_1,\ldots ,X_N\in VF(\O)$. 
The Lie algebra generated by $\{X_i\}_{i=1}^N$,   Lie$(\{X_i\}_{i=1}^N)$,
is the smallest subspace of $VF(\O)$, containing $\{X_i\}_{i=1}^N$, which is invariant under the action of Lie brackets. So, we have that 
\begin{equation*}
\mbox{Lie}(\{X_i\}_{i=1}^N)=\bigcup_{k=1}^\infty \mbox{Lie}^k(\{X_i\}_{i=1}^N),
\end{equation*}
where $\mbox{Lie}^k(\{X_i\}_{i=1}^N)$ is defined recursively by taking 
$$ \mbox{Lie}^1(\{X_i\}_{i=1}^N)=\Span\,\{X_i\}_{i=1}^N$$
and, for $ k\ge 1$,
\begin{multline*}
 \mbox{Lie}^{k+1}(\{X_i\}_{i=1}^N)
 \\
 =\Span\Big(\mbox{Lie}^k(\{X_i\}_{i=1}^N)\cup\Big\{[X,X_j]~:~X\in\mbox{Lie}^k(\{X_i\}_{i=1}^N),\,j=1,\dots,N \Big\} \Big).
\end{multline*}
For all $x\in X$ we also set 
\begin{equation*}
\begin{cases}
 \mbox{Lie}^k(\{X_i\}_{i=1}^N)[x]=\big\{X(x)~:~X\in\mbox{Lie}^k(\{X_i\}_{i=1}^N)\big\}
 &
 \forall k\ge 1
 \vspace{.2cm}
 \\
\mbox{Lie}(\{X_i\}_{i=1}^N)[x]=\big\{X(x)~:~X\in\mbox{Lie}(\{X_i\}_{i=1}^N)\big\}.
\end{cases}
\end{equation*}
We say that $\{X_i\}_{i=1}^N\subset VF(\O)$ is {\em a
  system of H\"ormander vector fields} on $\O$ if  the following      bracket generating condition holds:
  \begin{equation}
\label{BGC}
 \mbox{Lie}(\{X_i\}_{i=1}^N)[x]=\R^n\qquad\forall x\in\O.
\end{equation}
Moreover, we  say that $\{X_i\}_{i=1}^N$ is  a
system of H\"ormander vector fields on $\bO$ if the bracket generating condition holds true on some open set $\O'\supset\bO$. Finally,  we say that
  $\{X_i\}_{i=1}^N$ is {\em strongly bracket generating} 
  on $\O$ if for every $v=(v_1,\ldots ,v_N)\in \R^n\setminus\{ 0\}$ 
 \begin{equation}\label{eq:SBGC}
 \Span \{ X_i\}_{i=1}^{N}[x]+\Span \left\{ \sum_{j=1}^N [X_j,X_i]\right\}_{i=1}^{N}[x]=\R^n\qquad\forall x\in\O.
\end{equation}
The following hypotheses $(H)$ will be assumed throughout:
\[
\begin{array}{ll}
 (H1)  & \mbox{\em $\Omega \subset\R^n$ is a bounded open set with
boundary $\Gamma$ of class $C^\infty$}, \\
 (H2)  &  \mbox{\em$\{X_1,\ldots ,X_N\}$ is  a
system of H\"ormander vector fields on $\bO$.}
\end{array}
\]
 Let us point out that, here, $X_1,\ldots, X_N$ need not be linearly independent, nor we  suppose  $N< n$.

\medskip
We define the {\em Hamiltonian} associated with $\{X_1,\ldots ,X_N\}$ by
\begin{equation}
\label{eq:h}
h(x,p)=\sum_{j=1}^N X_j(x,p)^2,\quad (x,p)\in T^*\Omega ',
\end{equation}
where we have set, with a slight abuse of notation,
\begin{equation*}
X_j(x,p):= \langle X_j(x),p\rangle\qquad\forall \,(x,p)\in T^*\Omega ',\,\forall j=1,\dots,N
\end{equation*}
(note that $ T^*\Omega '$ can be identified with $\Omega'\times \R^n$). 

The {\em characteristic set} of $\{X_1,\ldots ,X_N\}$ is given by
\begin{equation}
\label{eq:char}
\Char (X_1,\ldots X_N)=\big\{ (x,p)\in \Omega' \times (\R^n\setminus \{0\})~:~ h(x,p)=0\big\}.
\end{equation}

Finally, a point $x\in\Gamma$ is  called \emph{characteristic} if the linear space generated by $X_1(x),\ldots ,X_N(x)$ is contained in the tangent space to $\Gamma$ at $x$. We denote by $E\subset\Gamma$ the set of all  characteristic points. The following result, essential for this paper, is due to Derridj~\cite{D}.
\begin{theorem}
\label{t:derridj}
Under assumption $(H)$, $E$ is a closed subset of $\Gamma$
of $(n-1)$-dimensional Hausdorff measure zero.
\end{theorem}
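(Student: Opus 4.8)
The plan is to establish closedness and the Hausdorff measure bound separately, deriving the latter by iterating along all brackets the tangency condition that defines $E$, and invoking Lebesgue's density theorem at each step.

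Fix a smooth defining function $\rho$ for $\Gamma$ on a tubular neighborhood $U\subset\O'$ of $\Gamma$ (e.g.\ the signed distance to $\Gamma$), so that $\Gamma=\{\rho=0\}$, $\nabla\rho\neq 0$ on $\Gamma$, and a vector $v\in\R^n$ lies in $T_x\Gamma$ ($x\in\Gamma$) exactly when $\langle v,\nabla\rho(x)\rangle=0$; equivalently, for a vector field $Y$ near $\Gamma$ one has $Y(x)\in T_x\Gamma$ iff $(Y\rho)(x)=0$. Hence $E=\bigcap_{j=1}^N\{x\in\Gamma:(X_j\rho)(x)=0\}$ is an intersection of finitely many relatively closed subsets of the compact set $\Gamma$, so $E$ is closed. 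For the measure bound, for $k\ge 1$ set
\[
E^{(k)}=\big\{x\in\Gamma:\ (Y\rho)(x)=0\ \ \forall\,Y\in\mathrm{Lie}^k(\{X_i\}_{i=1}^N)\big\}.
\]
Then $E^{(1)}=E$; each $E^{(k)}$ is relatively closed (a basis of the finite-dimensional space $\mathrm{Lie}^k(\{X_i\}_{i=1}^N)$ --- spanned by iterated brackets of length at most $k$ --- gives finitely many continuous conditions); the $E^{(k)}$ decrease with $k$; and $x\notin E^{(k)}$ whenever $\mathrm{Lie}^k(\{X_i\}_{i=1}^N)[x]=\R^n$, since $\R^n\not\subset T_x\Gamma$. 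By the bracket generating condition \eqref{BGC}, the open sets $\{x\in\O':\mathrm{Lie}^k(\{X_i\}_{i=1}^N)[x]=\R^n\}$ increase to $\O'$, so compactness of $\Gamma$ yields some $k_0$ with $E^{(k_0)}=\emptyset$. Since $E=E^{(1)}$ is the disjoint union of $E^{(k_0)}=\emptyset$ and of the sets $E^{(k)}\setminus E^{(k+1)}$, $1\le k<k_0$, it suffices to show $\mathcal H^{n-1}(E^{(k)}\setminus E^{(k+1)})=0$ for every $k$.

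The core ingredient is the elementary fact that if $g\in C^1(U)$ vanishes on a measurable set $S\subseteq\Gamma$, then the tangential gradient $\nabla_\Gamma(g|_\Gamma)$ vanishes $\mathcal H^{n-1}$-a.e.\ on $S$: at an $\mathcal H^{n-1}$-density point $x$ of $S$, if $\nabla_\Gamma(g|_\Gamma)(x)\neq 0$ then by the implicit function theorem $\{g|_\Gamma=0\}$ is, near $x$, a $C^1$ submanifold of $\Gamma$ of dimension $n-2$, hence of $\mathcal H^{n-1}$-measure zero, contradicting density (on coordinate charts $\mathcal H^{n-1}$ on $\Gamma$ is comparable to Lebesgue measure, so Lebesgue's density theorem applies). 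To pass from $E^{(k)}$ to $E^{(k+1)}$: by the recursive definition of $\mathrm{Lie}^{k+1}$ and bilinearity of the bracket, $\mathrm{Lie}^{k+1}(\{X_i\}_{i=1}^N)$ is spanned by $\mathrm{Lie}^k(\{X_i\}_{i=1}^N)$ together with the finitely many brackets $[X,X_j]$, where $X$ ranges over a basis of $\mathrm{Lie}^k(\{X_i\}_{i=1}^N)$ and $j=1,\dots,N$. On $E^{(k)}$ every $Y\in\mathrm{Lie}^k(\{X_i\}_{i=1}^N)$ satisfies $Y\rho=0$; in particular $X_j\rho$ and $X\rho$ (both smooth on $U$) vanish on $E^{(k)}$, and $X(x),X_j(x)\in T_x\Gamma$ for $x\in E^{(k)}$. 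Applying the fact above to $g=X_j\rho$ and to $g=X\rho$, at $\mathcal H^{n-1}$-a.e.\ $x\in E^{(k)}$ both $\nabla_\Gamma((X_j\rho)|_\Gamma)(x)$ and $\nabla_\Gamma((X\rho)|_\Gamma)(x)$ vanish; since $X(x),X_j(x)$ are tangent, the directional derivatives $X(X_j\rho)(x)$ and $X_j(X\rho)(x)$ reduce to the corresponding tangential directional derivatives, so $[X,X_j]\rho(x)=X(X_j\rho)(x)-X_j(X\rho)(x)=0$. As only finitely many brackets $[X,X_j]$ occur, $\mathcal H^{n-1}$-a.e.\ point of $E^{(k)}$ annihilates every generator of $\mathrm{Lie}^{k+1}(\{X_i\}_{i=1}^N)$, hence lies in $E^{(k+1)}$; therefore $\mathcal H^{n-1}(E^{(k)}\setminus E^{(k+1)})=0$, which finishes the proof.

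The main obstacle is the careful use of the density fact: one must check that $X(X_j\rho)(x)$ and $X_j(X\rho)(x)$ depend only on the vectors $X(x),X_j(x)$, which are tangent to $\Gamma$ on $E^{(k)}$, so that these derivatives genuinely coincide with tangential derivatives of $(X_j\rho)|_\Gamma$ and $(X\rho)|_\Gamma$; granting this, the density argument applies verbatim. The remaining pieces --- the recursive structure and finite dimensionality of $\mathrm{Lie}^k$, and the compactness of $\Gamma$ --- are routine.
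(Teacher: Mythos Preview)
The paper does not prove this theorem: it is stated as a result of Derridj~\cite{D} and invoked as a black box throughout. Your argument, by contrast, supplies a correct self-contained proof. The strategy---filtering $E$ by the sets $E^{(k)}$ on which all of $\mathrm{Lie}^k(\{X_i\})$ is tangent to $\Gamma$, using compactness of $\Gamma$ together with H\"ormander's condition to obtain $E^{(k_0)}=\emptyset$, and then showing $\mathcal H^{n-1}\big(E^{(k)}\setminus E^{(k+1)}\big)=0$ via the implicit function theorem (equivalently, Lebesgue density)---is the natural line of attack and is in the spirit of Derridj's original argument, so you have essentially reconstructed the proof the authors chose to cite rather than reproduce. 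The two points that require care, namely that each $\mathrm{Lie}^k$ (as an $\R$-linear span of iterated brackets of length $\le k$) is finite-dimensional so only finitely many scalar conditions are involved, and that $X(X_j\rho)(x)$ genuinely reduces to a tangential directional derivative of $(X_j\rho)|_\Gamma$ once $X(x)\in T_x\Gamma$, are both handled correctly.
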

Under assumption $(H)$, consider the Dirichlet problem
 \begin{equation}
\label{eq:sosq}
\left\{
\begin{array}{l}
\sum_{j=1}^N (X_jT)^2(x)=1\quad \text{ in
}\quad \Omega 
\vspace{.2cm}
\\
T=0 \quad \text{ on
}\quad \Gamma 
\end{array}
\right . 
\end{equation}
and observe that, since  the Hamiltonian $h(x,p)$ is not strictly convex in $p$,  characteristic points may appear.
%
%
%
%
%
%
It is well known that \eqref{eq:sosq} admits a unique continuous
viscosity solution $T:\overline{\Omega}\to \R$. Indeed, taking  $\Gamma$ as the target set,
the minimum time function associated with $\{X_1,\ldots ,X_N\}$  
is a solution of the above equation. Such a function is defined as follows.  Given $x\in \bO$ and a measurable control 
$$u=(u_1,\dots,u_N):[0,+\infty[\to \R^N,
$$ 
taking values in the unit ball of $\R^N$, let us denote by  $y^{x,u}(\cdot)$ the unique solution of the Cauchy problem
 \begin{equation}\label{eq:se}
 \begin{cases}
 y'(t)=\sum_{j=1}^N u_j(t)X_j(y(t))
 &
 (t\geq 0)
 \\
 y(0)=x.
 \end{cases}
\end{equation}
Define the {\em transfer time} to $\Gamma$ as 
$$
\tau_\Gamma(x,u)=\inf \big\{ t\geq 0~:~y^{x,u}(t)\in \Gamma\big\}.
$$
Clearly,  $\tau_\Gamma(x,u)\in [0,+\infty]$. The  {\em Minimum Time Problem}  with target $\Gamma$ is the following:
\begin{itemize}
\item[(MTP)]  To minimize $\tau_\Gamma(x,u)$ over all controls  $u:[0,+\infty[\to \overline{B}_1(0)$.
\end{itemize}
Then, the {\em minimum time function}, defined as
$$
T(x)=\inf_{u(\cdot )}\tau_\Gamma(x,u)\qquad (x\in \overline{\Omega}),
$$
turns out to be the unique viscosity solution of the Dirichlet problem \eqref{eq:sosq} (see, e.g., \cite{BCD}).
It is well-known that H\"ormander's bracket ge\-ne\-ra\-ting condition implies that  \eqref{eq:se} is small time locally controllable. Hence, $T$ is  finite and continuous (see, e.g., \cite{BCD}).
\begin{remark}\em
We recall that a  $u(\cdot)$ is called an {\em optimal control} at a point $x\in\Omega$ if $T(x)=\tau_\Gamma (x,u)$. The corresponding solution of \eqref{eq:se}, $y^{x,u}$, is called the {\em time-optimal trajectory} at $x$ associated with $u$. 
\end{remark}
A more precise description of the continuity properties of $T$ can be given by looking at the maximal length of commutators needed to generate $\R^n$, that is, the function $k(\cdot)$ defined as follows:
\begin{equation}
\label{de:k(x)}
k(x)=\min\big\{k\ge 1~:~ \mbox{Lie}^kj(\{X_i\}_{i=1}^N)[x]=\R^n\big\} \qquad(x\in \Omega).
\end{equation}
We also define
\begin{equation}
  \label{eq:r}
  r_\O=\max\{ k(x)~:~x\in \overline{\Omega}\}.
\end{equation}
Then, thanks to the Baker-Campbell-Hausdorff formula (see, e.g., \cite{NSW}), $T$ turns out to be H\"older continuous of exponent $1/r_\O$  on $\overline{\Omega}$ .

Next, observe that the equation in \eqref{eq:sosq} can be re\-cast in the form
\begin{equation}\label{eq:ico}
\langle A(x)DT(x),DT(x)\rangle =1\quad \text{ in }\quad \Omega ,
\end{equation}
where $A(\cdot )$  is a suitable positive semidefinite $n\times n$ matrix with smooth entries and $DT=(\partial_{x_1}T,\dots,\partial_{x_n}T)$.
If $A(\cdot )$ is nondegenerate, then a classical viscosity argument ensures 
that any continuous
viscosity solution of \eqref{eq:ico}  
 is locally Lipschitz continuous in $\Omega$.
Furthermore, under very weak regularity conditions\footnote{It
  suffices to assume that $x\mapsto \langle A(x)p,p\rangle$ is
  semiconvex (i.e. it can be locally written as the sum of a convex
  with a smooth function) uniformly w.r.t. $p$ in the unit sphere of
  $\R^n$.}  on the entries of $A(\cdot )$, one can show that any continuous viscosity solution of \eqref{eq:ico} 
  is locally semiconcave in $\O$
(see \cite{A0}).
We recall that a function $T:\Omega\to \R$ is {\em locally
semiconcave} in $\Omega$ 
if for every $V\Subset \Omega$ there exists a constant $C$ such that
$D^2T\le CI$ in $\mathcal{D}'(V)$ (in the sense of quadratic forms).
 We point out that, for the aforementioned regularity results, the strict convexity of the map $p\mapsto \langle A(x)p,p\rangle$ is a crucial assumption. 

On the contrary, when $A(\cdot )$ in \eqref{eq:ico} is associated with a system of H\"ormander's vector fields with $N<n$,  known results mostly concern the continuity of solutions. For instance, it is known that any continuous viscosity solution of \eqref{eq:ico}  is H\"older continuous of exponent $1/r_\O$ given by \eqref{eq:r} (see, e.g., \cite{EJ}).
%
On the other hand, stronger regularity results are known under more restrictive assumption. For instance, as we recalled in the introduction, the following semiconcavity result holds for the sub-Riemannian setting~(\cite{CR}):  if  $X_1(x),\ldots ,X_N(x)$ 
are linearly independent for every $x\in \R^n$ and  strongly
bracket generating
 on $\R^n$, then for every $x_0\in\R^n$ the minimum time
function associated with the control system \eqref{eq:se} and target
$\{x_0\}$ is locally semiconcave---hence, locally Lipschitz---in $\R^n\setminus\{ x_0\}$ (see also Remark~\ref{re:Sect4} below).

\section{Singular time-optimal trajectories}\label{Sect:stot}
\setcounter{equation}{0}
\setcounter{theorem}{0}
\setcounter{proposition}{0}  
\setcounter{lemma}{0}
\setcounter{corollary}{0} 
\setcounter{definition}{0}

The  notion of singular trajectory that we give below plays a key role in our analysis.
For any boundary point $z\in \Gamma$ we denote by $\nu(z)$ the outward unit normal to
$\Gamma$ at $z$ and we set 
$$N_{\Gamma}(z) :=\{ \lambda\nu (z)~:~\lambda\ge 0\}.
 $$
 \begin{definition}\label{optimal_triple}
  Let $x\in \Omega$ and let $y(\cdot )=y^{x,u}(\cdot)$ be a time-optimal
  trajectory, with $u:[0,T(x)]\to \overline{B}_1(0)$. 
  We say that $y(\cdot )$ is {\em singular} if there exists an absolutely continuous
  arc 
  $p:[0,T(x)]\to \R^n \setminus \lbrace 0 \rbrace $ 
  such that, for a.e. $t\in [0,T(x)]$,
 \begin{equation}
    \label{eq:st}
    p_k'(t)=- \sum_{j=1}^N u_j(t) \langle \partial_{x_k}X_j(y(t)),
    p(t)\rangle , \quad \langle X_k(y(t)),p(t)\rangle =0,  
 \end{equation}
 for every $k=1,\ldots ,N$, and
 \begin{equation}
   \label{eq:transv}
p(T(x))\in N_{\Gamma}(y(T(x))).
\end{equation} 
\end{definition}
\begin{remark}\em
  (i) Our use of the adjective "singular" in Definition~\ref{optimal_triple} is classical but it is also motivated by Proposition~\ref{t:lipschitz-no-sing} below.

(ii) We recall that the strong bracket generating condition \eqref{eq:SBGC} ensures
the absence of singular time-optimal trajectories (see, e.g., \cite{CR}).

(iii)  We point out that, introducing the Control Theory Hamiltonian
  \begin{equation}
    \label{eq:ch}
H(x,p,u)=\sum_{j=1}^N u_j X_j(x,p),
  \end{equation}
the optimal triple $(y,u,p)$ arising from Definition \ref{optimal_triple} satisfies, for a.e. $t\in[0,T(x)]$, the Hamiltonian system
 \begin{equation}
    \label{eq:hs}
\left\{
    \begin{array}{l}
      y'(t)=D_pH(y(t),p(t),u(t))
      \vspace{.2cm}
      \\ 
      p'(t)=- D_xH(y(t),p(t),u(t))
       \vspace{.2cm}
    \end{array}
    \right .
    \end{equation}
In other words, a time-optimal trajectory is singular if it can be lifted in the phase space in such a way that the lifted trajectory:
\begin{itemize}
\item satisfies the
Hamiltonian system \eqref{eq:hs} (with Hamiltonian given by \eqref{eq:ch}) together with the transversality condition \eqref{eq:transv}, and 
\item  lies in the characteristic set $\Char (X_1,\ldots ,X_N)$.  
\end{itemize}
\end{remark}
The following characterization of  singular time-optimal
trajectories will be used throughout the paper.   
\begin{theorem}
\label{t:sincar}
Assume $(H)$. Let $x\in\Omega$ and let
$y^{x,u}$ be a time-optimal trajectory. Then, $y^{x,u}$ is singular if
and only if $y^{x,u}(T(x))\in E$.
\end{theorem}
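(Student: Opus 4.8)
The plan is to prove the two implications separately, relying on the first-order necessary conditions (Pontryagin Maximum Principle) for the minimum time problem (MTP) and on the transversality condition it provides at the boundary.

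\medskip

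\textbf{($\Rightarrow$) Suppose $y^{x,u}$ is singular.} By Definition~\ref{optimal_triple} there is a nonvanishing adjoint arc $p(\cdot)$ satisfying \eqref{eq:st} and the transversality condition \eqref{eq:transv}. Set $z = y^{x,u}(T(x)) \in \Gamma$. From \eqref{eq:transv} we have $p(T(x)) = \lambda \nu(z)$ for some $\lambda \ge 0$; since $p$ never vanishes, $\lambda > 0$, so $p(T(x))$ is a (positive) multiple of the outward unit normal $\nu(z)$. The orthogonality relations in \eqref{eq:st}, evaluated at $t = T(x)$, give $\langle X_k(z), p(T(x))\rangle = 0$ for every $k = 1,\dots,N$, hence $\langle X_k(z), \nu(z)\rangle = 0$ for all $k$. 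This says precisely that each $X_k(z)$ lies in the tangent space $T_z\Gamma$, i.e. $\Span\{X_1(z),\dots,X_N(z)\} \subset T_z\Gamma$, so $z \in E$.

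\medskip

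\textbf{($\Leftarrow$) Suppose $z := y^{x,u}(T(x)) \in E$.} Here I would invoke the Pontryagin Maximum Principle for (MTP): since $u$ is an optimal control at $x$, there exist $\lambda_0 \ge 0$ and an absolutely continuous arc $q:[0,T(x)]\to\R^n$, not both vanishing, with $q$ solving the adjoint equation
\begin{equation*}
q_k'(t) = -\sum_{j=1}^N u_j(t)\,\langle \partial_{x_k} X_j(y(t)), q(t)\rangle,
\end{equation*}
the maximality condition $\sum_j u_j(t) \langle X_j(y(t)), q(t)\rangle = \max_{|v|\le 1}\sum_j v_j \langle X_j(y(t)), q(t)\rangle$, and the transversality condition $q(T(x)) \in N_\Gamma(z) = \{\lambda\nu(z):\lambda\ge 0\}$ (here one uses that the target $\Gamma$ is a smooth hypersurface, so the limiting normal cone to $\Gamma$ at $z$ is the ray generated by $\nu(z)$; actually one should take the full normal line, but the correct sign is forced by the Hamiltonian being $\ge 0$ along an optimal trajectory, which pins down $q(T(x)) = \lambda\nu(z)$ with $\lambda\ge 0$). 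The key observation is now that $z\in E$ forces $\langle X_k(z),\nu(z)\rangle = 0$ for all $k$, so the maximized Hamiltonian at time $T(x)$ equals $\max_{|v|\le 1}\sum_j v_j\langle X_j(z), q(T(x))\rangle = |(\langle X_j(z),q(T(x))\rangle)_j| = 0$. Since along a time-optimal trajectory the maximized Hamiltonian is constant (equal to a nonnegative constant, in fact to $1$ after normalization in the nonsingular regime), it must be identically $0$ here; this gives $\langle X_k(y(t)),q(t)\rangle = 0$ for a.e.\ $t$, which is exactly the second relation in \eqref{eq:st}. It remains to rule out $q\equiv 0$: if $q(T(x)) = 0$ then $q\equiv 0$ by the linear adjoint ODE, and then $\lambda_0 > 0$, but the vanishing maximized Hamiltonian together with $\lambda_0>0$ contradicts the nontriviality/normalization in the PMP for the minimum-time problem; hence $q$ never vanishes. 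Setting $p := q$ we obtain all the requirements of Definition~\ref{optimal_triple}, so $y^{x,u}$ is singular.

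\medskip

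The main obstacle, and the step deserving the most care, is the backward direction: extracting from the mere membership $z\in E$ an adjoint arc that \emph{never} vanishes and that is \emph{characteristic} (satisfies $\langle X_k(y(t)),p(t)\rangle\equiv 0$) rather than merely satisfying the PMP. This hinges on two facts that must be stated precisely—(i) the maximized Hamiltonian is constant along optimal trajectories, and (ii) the transversality condition at a characteristic boundary point forces that constant to be zero—and then on a nontriviality argument (the multiplier $(\lambda_0,q)$ cannot degenerate) to conclude $p = q \ne 0$ throughout. One should also be slightly careful about which version of the PMP/normal cone is used at the boundary, since $\Gamma$ being a smooth hypersurface is what makes $N_\Gamma(z)$ a half-line and legitimizes the form \eqref{eq:transv}; the results recalled in the excerpt (small-time local controllability, finiteness and continuity of $T$) guarantee the optimal control exists and the PMP applies.
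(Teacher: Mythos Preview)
Your proof is correct and follows essentially the same approach as the paper: both directions rest on the Pontryagin Maximum Principle, the transversality condition, and the constancy of the maximized Hamiltonian $\sqrt{h(y(t),p(t))}$ along the optimal trajectory. Your treatment of the nonvanishing of the adjoint arc (via the multiplier pair $(\lambda_0,q)$ and the free-final-time condition) is slightly more explicit than the paper's, which simply asserts $p:[0,T(x)]\to\R^n\setminus\{0\}$ from the PMP without further comment.
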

\begin{proof}
Let $x\in \Omega$ and let $u(\cdot )$ a time-optimal control. Then, by the Pontryagin maximum principle  there exists an absolutely continuous function
$p:[0,T(x)]\to \R^n\setminus\{ 0\}$ (adjoint state) such that  the pair $(y^{x,u},p)$ satisfies the Hamiltonian system
\eqref{eq:hs} and the transversality condition   \eqref{eq:transv}.
%
Furthermore, for a.e. $t\in [0,T(x)]$, 
\begin{multline}
\label{eq:max}
H(y^{x,u}(t),p(t),u(t))=\max_{u\in \overline{B}_1(0) }\sum_{j=1}^N u_j \langle X_j(y^{x,u}(t)),p(t)\rangle 
\\
=\Big(\sum_{j=1}^N \big\langle X_j(y^{x,u}(t)),p(t)\big\rangle^2\Big)^{1/2}=\sqrt{h(y^{x,u}(t),p(t))}.
\end{multline}
Also, the function $[0,T(x)]\ni t\mapsto h(y^{x,u}(t),p(t))$ is constant. 

Now, suppose  $y^{x,u}(T(x))\notin E$. Then
$$
h(y^{x,u}(T(x)),p(T(x)))\neq 0  
$$
So, by the constancy of the Hamiltonian, 
$y^{x,u}$ cannot be singular.

Vice versa, assume  $y^{x,u}(T(x))\in E$. Then,  again by Pontryagin's principle, we deduce that $(y^{x,u}(t),p(t))\in \Char (X_1,\ldots, X_N)$, for every $t\in [0,T(x)]$, that is, $y^{x,u}$ is a singular time-optimal trajectory.
\end{proof}
The following is a point-wise notion of Lipschitz continuity (see also Federer \cite{F}).
\begin{definition}\label{de:lipschitz_at}
 We say that a function $f:\bO\to\R$ is Lipschitz  at a point $x_0\in\bO$ if there exists a neighbourhood $U$ of $x_0$ and a constant $L\ge 0$ such that 
 \begin{equation*}
|f(x)-f(x_0)|\le L|x-x_0|\qquad\forall x\in U\cap\bO.
\end{equation*}
\end{definition}
\noindent
Equivalently, $f$ is Lipschitz  at $x_0$ if and only if
\begin{equation*}
\limsup_{\bO\ni x\to x_0}\frac{|f(x)-f(x_0)|}{|x-x_0|}<\infty.
\end{equation*}
Observe that a function may well-be Lipschitz continuous at $x_0$ without being Lipschitz on any neighbourhood of $x_0$.
However, the interest of such kind of continuity is made clear by our next result. 
\begin{theorem}\label{t:nls}
  Assume $(H)$ and let $x_0\in \Omega$. 
  Then $T$ fails to be Lipschitz at $x_0$ if and only if there exists a singular time-optimal $y^{x_0,u}$ .
\end{theorem}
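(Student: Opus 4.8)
\textbf{Proof strategy for Theorem~\ref{t:nls}.} The plan is to prove the two implications separately, using Theorem~\ref{t:sincar} to translate "singular trajectory" into "the time-optimal trajectory ends at a characteristic point of $\Gamma$."

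\emph{Sufficiency of a singular trajectory for non-Lipschitz continuity.} Suppose $y^{x_0,u}$ is a singular time-optimal trajectory, so that by Theorem~\ref{t:sincar} the endpoint $z := y^{x_0,u}(T(x_0))$ lies in $E$. The idea is to show that near $z$ the boundary value problem forces $T$ to behave super-linearly. First I would move the base point: set $x_s := y^{x_0,u}(T(x_0)-s)$ for small $s>0$, so $x_s \to z$ along the trajectory and $T(x_s)=s$ (by the dynamic programming principle, since the tail of an optimal trajectory is optimal). The key estimate is a lower bound of the form $|x_s - z| \le C s^2$ (or more generally $|x_s-z|=o(s)$), which will follow from the characteristic condition at $z$: since $X_1(z),\dots,X_N(z)$ are tangent to $\Gamma$, the velocity $y'(t) = \sum_j u_j(t) X_j(y(t))$ is, near the endpoint, almost tangent to $\Gamma$, hence the normal component of the displacement $z - x_s$ vanishes to second order in $s$ while $\dist(x_s,\Gamma) \le |x_s - z|$. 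Since $T=0$ on $\Gamma$ and $T\ge 0$, this gives $0 \le T(x_s) = s$ but $\dist(x_s,\Gamma)\le Cs^2$; comparing with any nearby boundary point $w_s\in\Gamma$ with $|x_s - w_s|\le Cs^2$ we get
\[
\frac{|T(x_s)-T(w_s)|}{|x_s-w_s|} \ge \frac{s}{Cs^2} = \frac{1}{Cs}\to+\infty,
\]
so $T$ is not Lipschitz at $z$. To conclude that $T$ fails to be Lipschitz \emph{at $x_0$} rather than merely at $z$, I would use that $T$ is Lipschitz along the optimal trajectory with constant $1$ (indeed $|T(y(t))-T(y(\sigma))|\le|t-\sigma|$ and the trajectory has bounded speed), so a Lipschitz bound for $T$ at $x_0$ would propagate to a Lipschitz bound at points $x_s$ close to $x_0$ — actually the cleaner route is to run the whole argument with $x_0$ itself as base point, estimating $|x_0 - w|$ for a boundary point $w$ reachable from $x_0$ in "bent" time: by small-time local controllability near the characteristic endpoint one can reach $\Gamma$ from a point $\xi$ near $x_0$ in time $\sim\sqrt{\dist(\xi,\Gamma)}$ rather than $\sim\dist(\xi,\Gamma)$, and this is exactly the Hölder-but-not-Lipschitz blow-up. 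I expect the bookkeeping here — propagating non-Lipschitzness back from $z$ to $x_0$, and making the $o(s)$ estimate on $|x_s-z|$ rigorous — to be the main obstacle, and it will likely require the boundary regularity result of Appendix~\ref{a:b} (Theorem~\ref{t:bregularity}) to control $T$ near $\Gamma\setminus E$.

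\emph{Necessity: if $T$ is not Lipschitz at $x_0$ then a singular trajectory starts there.} For the converse I would argue by contraposition: assume every time-optimal trajectory from $x_0$ is non-singular, i.e. by Theorem~\ref{t:sincar} ends at a non-characteristic point of $\Gamma$, and show $T$ is Lipschitz at $x_0$. Pick an optimal control $u$ and let $z=y^{x_0,u}(T(x_0))\in\Gamma\setminus E$. Since $z$ is non-characteristic, some $X_j(z)$ has a nonzero normal component, so $\langle X_j(z),\nu(z)\rangle\ne 0$ for some $j$; by continuity this persists in a neighbourhood of $z$. This "transversality" lets one steer to $\Gamma$ cheaply: for $\xi$ near $x_0$, follow the optimal control $u$ for time slightly less than $T(x_0)$ to land near $z$, then use the transverse field $X_j$ to reach $\Gamma$ in time $O(\dist(\cdot,\Gamma)) = O(|\xi - z'|)$ — a genuinely \emph{linear}, not square-root, cost. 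Combining, $T(\xi)\le T(x_0) + C|\xi-x_0|$ for $\xi$ in a neighbourhood; the reverse inequality $T(x_0)\le T(\xi)+C|\xi-x_0|$ is symmetric. Hence $T$ is Lipschitz at $x_0$, completing the contrapositive. The regularity of $T$ up to $\Gamma$ away from $E$ (again Theorem~\ref{t:bregularity}) is what makes the "cheap steering to the boundary" step precise, since one needs $T$ itself — not just the set $\Gamma$ — to be locally Lipschitz near $z$ in order to close the estimate; this interplay with the appendix is, I expect, the subtle point of this direction.

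\textbf{Remark on the structure.} The whole proof turns on the dichotomy supplied by Theorem~\ref{t:sincar}: the endpoint of an optimal trajectory is either characteristic (forcing a square-root, hence non-Lipschitz, blow-up of $T$) or non-characteristic (allowing transverse, linear-cost exit, hence local Lipschitz regularity). Making "square-root versus linear cost of reaching $\Gamma$" quantitative — and transferring the resulting estimate from the boundary endpoint back to the interior point $x_0$ along the finite-speed optimal trajectory — is where the real work lies, and I would organize the argument around that single quantitative lemma.
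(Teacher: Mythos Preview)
Your Direction~2 (contrapositive: if every optimal trajectory from $x_0$ ends outside $E$, then $T$ is Lipschitz at $x_0$) is essentially the paper's argument. One point you gloss over: the ``symmetric'' reverse inequality $T(x_0)\le T(\xi)+C|\xi-x_0|$ requires information about optimal trajectories from $\xi$, not from $x_0$. The paper handles this by taking a blow-up sequence $x_j\to x_0$, extracting a weak limit of the optimal controls $u_j$ at $x_j$ to obtain an optimal control $u_0$ at $x_0$, and then using that $y^{x_0,u_0}(T(x_0))\notin E$ (by hypothesis) so that the endpoints $y^{x_j,u_j}(T(x_j))$ lie in a fixed non-characteristic neighbourhood where the linear bound $T(y)\le C\dist(y,\Gamma)$ holds. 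Your sketch would need this compactness step.

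Your Direction~1 has a genuine gap. You correctly derive (via the tangency of $X_1,\dots,X_N$ at $z$) that $\dist(x_s,\Gamma)\le Cs^2$ while $T(x_s)=s$, which shows the ratio $|T(x_s)-T(w_s)|/|x_s-w_s|$ blows up for suitable $w_s\in\Gamma$. But this exhibits non-Lipschitz behaviour along sequences converging to $z$, not to $x_0$, and pointwise Lipschitz continuity at $x_0$ says nothing about nearby base points---the paper explicitly flags this after Definition~\ref{de:lipschitz_at}. Your proposed fix (``a Lipschitz bound at $x_0$ would propagate to $x_s$'') is therefore false as stated, and the alternative ``cleaner route'' never produces a concrete point $\xi$ near $x_0$ with large difference quotient. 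The paper closes this gap with a different mechanism: it takes the adjoint arc $p(\cdot)$ supplied by the Pontryagin principle (whose existence and transversality are exactly the content of ``singular'') and shows by a variational computation along perturbed trajectories $y^{x,u}$ that $(-p(0),0)$ is a proximal normal to $\hp(T)$ at $(x_0,T(x_0))$, i.e.\ $-p(0)\in\hps T(x_0)\setminus\{0\}$. This immediately rules out Lipschitz continuity at $x_0$. The dual arc is the transport device that carries the boundary obstruction back to $x_0$; it is the idea your outline is missing. (Your geometric picture can in fact be salvaged by pulling the boundary points $w_s$ back through the inverse flow of $u$ to obtain $\xi_s$ with $|\xi_s-x_0|\le Cs^2$ and $T(\xi_s)\le T(x_0)-s$, but that argument is not the one you wrote.)
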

For the proof Theorem~\ref{t:nls} we borrow the following notion from nonsmooth analysis.
\begin{definition}[Proximal normals]
  Let $S\subset\R^n$ be a closed set. A vector $v\in\R^n$ is called a proximal normal to $S$ at $x$ if there exist  $\delta>0$ and $C>0$ (possibly depending on $x,v$) such that
  \begin{equation}
    \label{eqa:nc}
    \langle v, y-x\rangle \le C |y-x|^2\qquad \forall\,y\in B_\delta (x)\cap S. 
    \end{equation}
  The set of all proximal normals to $S$ at $x$ will be denoted by $N^P_S(x)$. 
  \end{definition}
We recall that the the hypograph of  a  function $f:\overline{\Omega}\to \R$ is the set
$$
\hp(f)=\big\{ (x,\alpha )\in \overline{\Omega}\times \R~:~\alpha \le f(x)\big\} .  
$$
\begin{definition}[Horizontal  proximal supergradients]
  The set of horizontal proximal subgradients of $f$ at a point $x\in\O$ is given by
  $$
\hps f(x)=\big\{ p \in \R^n~:~(-p,0)\in N_{\hp(f)}(x,f(x))\big\} . 
  $$
\end{definition}
We note that, in order for $f$ to be Lipschitz at a point $x_0\in \Omega$, it is necessary (but not sufficient) that $\hps f(x_0)=\{0\}$.

\begin{proof}[Proof of Theorem~\ref{t:nls}]

Assuming that $y^{x_0,u}$ is a singular time-optimal trajectory, let us show that  $\hps T(x_0)$ contains a nonzero vector, which  implies that $T$ cannot be Lipschitz at $x_0$.
Here we adapt, to systems of
H\"ormander’s vector fields, an argument used in \cite[Theorem~4.1]{CMN} to study
time optimal control problems for differential inclusions. We observe that the present context is different from the one in \cite{CMN} in that the nondegeneracy assumption of \cite{CMN} is not satisfied by our control system. 
We will use  the dual arc  $p(\cdot )$ for  $y^{x_0,u}$ provided by Pontryagin's principle.
Observe that, in view of \eqref{eq:transv},  $\exists\,\delta>0$ such that
    $$
    \overline{B}_\delta \big(y^{x_0,u}(T(x_0))+\delta \nu (x_0)\big)\cap \overline{\Omega} =\{ y^{x_0,u}(T(x_0))\},
       $$
or
       \begin{equation}\label{eqa:tran}
\big\langle p(T(x_0)), y-y^{x_0,u}(T(x_0))\big\rangle \le \frac 1 \delta \,| y-y^{x_0,u}(T(x_0))|^2\quad\forall y\in \overline{\Omega},
         \end{equation}
that is, $( p(T(x_0)),0)\in N_{\hp(T)} ( y^{x_0,u}(T(x_0)),0)$.

We now proceed  to show that
    \begin{equation}\label{eqa:fc}
     -p(0)\in \hps T(x_0), 
\end{equation}
    or, equivalently,
 $$
 (p(0),0)\in N_{\hp(T)} ( x_0, T(x_0)).
 $$
  We  have to prove that there exists a  constant $C_1>0$ such that, for every $x\in \overline{\Omega}$
           with $|T(x)-T(x_0)|<1$, we have that
           \begin{equation}
             \label{eqa:fingo}
             \langle p(0), x-x_0\rangle \le C_1 \big(|x-x_0|^2+(\alpha -T(x_0))^2\big) \quad \forall\,\alpha \le T(x). 
             \end{equation}
  Set $T_0=T(x_0)$ and $T_1=T(x)$. We shall analyse the two  cases $T_1\le T_0$ and $T_1>T_0$ separately.

\medskip\noindent
{\em Case 1: }\fbox{$T_1\le T_0$}

\noindent
It suffices to verify \eqref{eqa:fingo} with $\alpha =T_1$.
Set
$$
\ox_0=y^{x_0,u}(T_1)\quad\mbox{and}\quad \ox=y^{x,u}(T_1).
$$
Notice that $T(\ox_0)=T_0-T_1\in [0,1[$ and $\ox \in\overline{\Omega}$. 
    Furthermore
    \begin{equation}
      \label{eqa:esttra}
|y^{x,u}(t)-y^{x_0,u}(t)|\le C_1 |x-x_0|\quad\forall\, t\in[0,T_1]
    \end{equation}
    for a suitable constant $C_1>0$ depending only on $\Omega$, the $C^1$ norms of the vector fields $X_1,\ldots ,X_N$, and $\max_{\overline{\Omega}}T$.
    Then, we find that
    \begin{multline}
      \label{eqa:par}
\langle p(0),x-x_0\rangle = \langle p(T_1) , \ox-\ox_0\rangle 
\\
-\int_0^{T_1} \frac d{dt} \big\langle p(t), y^{x,u}(t)-y^{x_0,u}(t)\big\rangle\, dt . 
    \end{multline}
  We now claim that, for every $t\in [0,T_1]$, 
\begin{equation}
\label{eq:claim_variation}
 \Big|\frac d{dt} \big\langle p(t),y^{x,u}(t)-y^{x_0,u}(t)\big\rangle \Big|\le C_2 |p(t)| \,|y(t)-y^{x,u}(t)|^2,
\end{equation}
for a suitable constant $C_2$ depending on $\Omega$, the $C^2$ norms of the vector fields $X_1,\ldots ,X_N$, and $\max_{\overline{\Omega}}T$.
Indeed, recalling \eqref{eq:st} we deduce that
\begin{multline*}
\frac d{dt} \big\langle p(t), y^{x,u}(t)-y^{x_0,u}(t)\big\rangle
\\
= \sum_{j=1}^N u_j(t) \Big\langle p(t),X_j(y^{x,u}(t))-X_j(y^{x_0,u}(t))\Big\rangle 
\\
-\sum_{j=1}^N u_j(t) \Big\langle p(t),dX_j(y^{x_0,u}(t)))( y^{x,u}(t)-y^{x_0,u}(t))\Big\rangle .
\end{multline*}
Hence, \eqref{eq:claim_variation} follows.

Then, recalling \eqref{eq:claim_variation} , \eqref{eqa:par}, and \eqref{eqa:esttra}, we find
\begin{equation}
  \label{eqa:par2}
\langle p(0),x-x_0\rangle \le C_3 |x-x_0|^2 + \langle p(T_1) , \ox-\ox_0\rangle.
\end{equation}
In order to bound the last term in the above inequality
we observe that
\begin{multline*}
  \langle p(T_1) ,\ox-\ox_0\rangle =\langle p(T_0) , \ox-\ox_0\rangle + \langle p(T_1)-p(T_0) , \ox-\ox_0\rangle 
  \\
\le \langle p(T_0) , \ox-\ox_0\rangle +C_4 ( |T_1-T_0|^2+|\ox-\ox_0|^2) 
\end{multline*}
for a suitable constant $C_4$.
Now, recall that
$$
 \langle p(T_0) , \ox-\ox_0\rangle  =\langle p(T_0) , y^{x,u}(T_1)-y^{x_0,u}(T_1)\rangle
$$
Then, we have
\begin{eqnarray*}
\lefteqn{ \langle p(T_0) , \ox-\ox_0\rangle} 
\\
&=&  \langle p(T_0) , y^{x_0,u}(T_0)-y^{x_0,u}(T_1)\rangle +\langle p(T_0) , y^{x,u}(T_1)-y^{x_0,u}(T_0)\rangle
\end{eqnarray*}
The second term can be estimated by \eqref{eqa:tran}:
\begin{multline*}
\langle p(T_0) , y^{x,u}(T_1)-y^{x_0,u}(T_0)\rangle
\\
 \le \frac 1\delta |y^{x,u}(T_1)-y^{x_0,u}(T_0)|^2\le
C_5 \big(|x-x_0|^2+(T_1-T_0)^2\big)
\end{multline*}
for some constant $C_5>0$.
Moreover, we have that 
$$
\langle p(T_0) , y^{x_0,u}(T_0)-y^{x_0,u}(T_1)\rangle =\sum_{j=1}^N\int_{T_1}^{T_0} u_j(t)\langle p(T_0), X_j(y^{x_0,u}(t))\rangle dt .
$$
Recalling that $y^{x_0,u}(T_0)\in E$, i.e. 
$$
\langle p(T_0),X_j(y^{x_0,u}(T_0))\rangle=0\quad \text{for}\quad j=1,\ldots ,N,
$$
we deduce that 
\begin{eqnarray*}
\lefteqn{\langle p(T_0) , y^{x_0,u}(T_0)-y^{x_0,u}(T_1)\rangle}
\\
&=& \sum_{j=1}^N \int_{T_1}^{T_0} u_j(t)\langle p(T_0),X_j(y^{x_0,u}(t))-X_j(y^{x_0,u}(T_0)\rangle\, dt \le C_6( T_1-T)^2
\end{eqnarray*}
for some constant $C_6>0$. Thus, \eqref{eqa:fingo} follows. 

\medskip\noindent
{\em Case 2:} \fbox{$T_1> T_0$}

\noindent
We have that
\begin{eqnarray*}
\lefteqn{\langle p(0),x-x_0\rangle}
\\
&= &\langle p(T_0),y^{x,u}(T_0)-y^{x_0,u}(T_0)\rangle -\int_0^{T_0} \frac d{dt}\langle p(t),y^{x,u}(t)-y^{x_0,u}(t)\rangle dt .
\end{eqnarray*}
The second term on the right-hand side of the last identity can be bounded by a constant times $|x-x_0|^2$ like in \eqref{eq:claim_variation}. In order to estimate the remaining term it suffices to observe that $y^{x,u}(T_0)\in\bO$. So, \eqref{eqa:tran} and \eqref{eqa:esttra} yield
$$
\langle p(T_0), y^{x,u}(T_0)-y^{x_0,u}(T_0)\rangle\le C_7 |x-x_0|^2.
$$
This shows that, if $y^{x_0,u}$ is a singular time-optimal trajectory, then $T$ fails to be Lipschitz at $x_0$.

In order to prove the converse, we argue by contradiction by assuming that  $T$ fails to be Lipschitz continuous at $x_0$ and that all optimal trajectories from $x_0$ are not singular. Let  $y^{x_0,u}$ be any time-optimal trajectory. 
So, $y^{x_0,u}(T(x_0))\in \Gamma\setminus E$ by of  Theorem~\ref{t:sincar}.
Then, appealing to Theorem~\ref{t:derridj} we can find  $\delta>0$ such that
$
B_\delta (y^{x_0,u}(T(x_0)))\cap E=\emptyset. 
      $
Furtheremore, taking $\delta$ small enough, we may assume that there exists a positive constant $C$ such that
\begin{equation}
  \label{eq:distest}
T(y)\le C \dist (y,\Gamma )\quad \text{ for every }\quad y\in B_\delta (y^{x_0,u}(T(x_0)))\cap \overline{\Omega} .
\end{equation}
Since $T$ is not Lipschitz  at $x_0$, there exists $\{x_j\}\subset\Omega$ such that
\begin{equation}
  \label{eq:abs}
\frac{|T(x_j)-T(x_0)|}{|x_j-x_0|}\geq j \text{ and }x_j\to x_0, \text{ as }j\to \infty . 
  \end{equation}
Set
$$
T_0=T(x_0) \quad\mbox{and}\quad T_j=T(x_j).$$
Two possibilities my occur: either $T_0\le T_j$ or $T_0>T_j$.
Observe that, in both  cases,
\begin{equation}
  \label{eq:esttra}
|y^{x_0,u}(t)-y^{x_j,u}(t)|\le C_1 |x_0-x_j| \quad \forall \,t \in [0, \min\{ T ,T_j\}],  
\end{equation}
where $C_1$ is a positive constant depending on $\Omega$, the $C^1$ norms of the vector fields $X_1,\ldots, X_N$, and $\max_\Omega T$. 

\medskip\noindent
{\em Case 1:} \fbox{$T_0\le T_j$} 

\noindent
We have that
\begin{equation}
  \label{eq:esj}
j\le \frac{T_j-T_0}{|x_j-x_0|}\le \frac {T(y^{x_j,u}(T_0)) }{|x_j-x_0|}. 
\end{equation}
Since $y^{x_0,u}(T_0)\in \Gamma $, by \eqref{eq:esttra} we deduce that, for $j$ large enough, 
$$
y^{x_j,u}(T_0)\in \bO \cap B_\delta (y^{x_0,u}(T_0)).
$$
Then, by \eqref{eq:esj} and \eqref{eq:distest}, we find
$$
j\le C\,\frac{ \dist (y^{x_j,u}(T_0) ,\Gamma ) }{|x_j-x_0|}\le C\, \frac{ |y^{x_j,u}(T_0)-y^{x_0,u}(T_0) }{|x_j-x_0|}. 
$$
So, using  \eqref{eq:esttra} once again, we find the contradiction
$
j\le CC_1.
$

\medskip\noindent
{\em Case 2:} \fbox{$T_j< T_0$} 

\noindent
Denote by $u_j$ a time-optimal control at $x_j$.   
After extending $u_j$ to $[0,T_0]$ by setting $u_j(t)\equiv 0$ for $t\in [T_j,T_0]$, we choose a
subsequence, still labeled $\{x_j\}$, such that $y^{x_j,u_j}$ converges uniformly to $y^{x_0,u_0}$ on all $[0,T_0]$, for a suitable $u_0(\cdot )$.  
Furthermore, because of $T_j\uparrow T_0$, we may assume that
$y^{x_j,u_j}(T_j)\in \Gamma\cap B_\delta (y^{x_0,u_0}(T_0))$ for $j$ large enough.
Then,  by dynamic programming, \eqref{eq:abs} yields
$$
j\le \frac{T(y^{x_0,u_j}(T_j))}{|x_0-x_j|}. 
$$
Using \eqref{eq:esttra} (with $u$ replaced by $u_j$) and \eqref{eq:distest} we find once more the contradiction 
$$
j\le C \frac{|y^{x_0,u_j}(T_j)- y^{x_j,u_j}(T_j)|}{|x_0-x_j|}\le CC_1 . 
$$
This implies that $T$ cannot be Lipschitz continuous along $y^{x_0,u_0}$.

In order to show that $y^{x_0,u_0}$ is singular  observe that otherwise $T$ would be Lipschitz continuous on a neighborhood of $y^{x_0,u_0}(T(x_0))$. Then Lipschitz regularity would propagate backwards along $y^{x_0,u_0}$, thus contradicting what we have just shown.  
\end{proof}
Our next result is a straightforward consequence of Theorem~\ref{t:nls} and dynamic programming.
\begin{corollary}\label{t:lipschitz-no-sing}
Assume $(H)$. Let $x_0\in \Omega$ and let $y^{x_0,u}$ be a singular time-optimal trajectory. Then, for any $t\in[0,T(x_0)[$,  $T$  fails to be Lipschitz continuous at $y^{x_0,u}(t)$. 
\end{corollary}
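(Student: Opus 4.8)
The plan is to deduce the corollary from Theorem~\ref{t:nls} together with the dynamic programming principle, the point being that the tail of a singular time-optimal trajectory is again a singular time-optimal trajectory issued from an interior point. Fix $t\in[0,T(x_0)[$ and set $x_t=y^{x_0,u}(t)$. Since $\tau_\Gamma(x_0,u)=T(x_0)$ is the first time the trajectory hits $\Gamma$, we have $x_t\in\Omega$. By dynamic programming, the reparametrized curve $s\mapsto y^{x_0,u}(t+s)$ for $s\in[0,T(x_0)-t]$, driven by the control $u(t+\cdot)$, is a time-optimal trajectory starting at $x_t$, with $T(x_t)=T(x_0)-t$; in particular $y^{x_0,u}(t+\cdot)=y^{x_t,u(t+\cdot)}(\cdot)$ is an optimal (not merely admissible) trajectory for the target $\Gamma$.

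Next I would check that this tail trajectory is itself singular. The quickest route uses Theorem~\ref{t:sincar}: since $y^{x_0,u}$ is singular, that theorem gives $y^{x_0,u}(T(x_0))\in E$; the tail trajectory from $x_t$ reaches $\Gamma$ precisely at the same point $y^{x_0,u}(T(x_0))\in E$, so a second application of Theorem~\ref{t:sincar} shows it is singular. Alternatively, one can verify Definition~\ref{optimal_triple} directly by using the restriction $p(t+\cdot)$ of the dual arc attached to $y^{x_0,u}$: it still solves the adjoint system \eqref{eq:st} and annihilates $X_1,\dots,X_N$ along the trajectory on the shorter interval, and it still satisfies the transversality condition \eqref{eq:transv} at the terminal time, which is unchanged.

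Finally I apply Theorem~\ref{t:nls} at the point $x_t\in\Omega$: the existence of a singular time-optimal trajectory issued from $x_t$ forces $T$ to fail to be Lipschitz at $x_t=y^{x_0,u}(t)$. Since $t\in[0,T(x_0)[$ was arbitrary, this is exactly the assertion of the corollary. There is essentially no obstacle to overcome here; the only points requiring a word of care are that the base point $y^{x_0,u}(t)$ indeed lies in the open set $\Omega$ (so that Theorem~\ref{t:nls} applies) and that dynamic programming delivers \emph{optimality}, not just admissibility, of the truncated control $u(t+\cdot)$ for reaching $\Gamma$ from $x_t$.
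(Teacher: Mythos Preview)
Your proof is correct and matches the paper's approach: the paper simply states that the corollary is ``a straightforward consequence of Theorem~\ref{t:nls} and dynamic programming'' without further details, and your argument---show via dynamic programming (or Theorem~\ref{t:sincar}) that the tail of a singular time-optimal trajectory is itself a singular time-optimal trajectory from an interior point, then invoke Theorem~\ref{t:nls}---is precisely what is intended.
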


\section{Regularity}\label{Sect:reg}
\setcounter{equation}{0}
\setcounter{theorem}{0}
\setcounter{proposition}{0}  
\setcounter{lemma}{0}
\setcounter{corollary}{0} 
\setcounter{definition}{0}

In this section we study the interior and boundary regularity of the solution of equation \eqref{eq:sosq}. 
Here is the main theorem of this section.
\begin{theorem}[Interior regularity] 
  \label{t:semiconcave}
Under assumption $(H)$, the following properties are equivalent:
  \begin{enumerate}
    \item  {\em (MTP)} admits no singular time-optimal
  trajectory;
    \item 
      $T$ is locally semiconcave in $\Omega$;

      \item $T$ is
      locally Lipschitz continuous in $\Omega$.  
\end{enumerate}
  \end{theorem}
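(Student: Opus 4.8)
The strategy is to prove the cycle $(1)\Rightarrow(2)\Rightarrow(3)\Rightarrow(1)$. The implication $(2)\Rightarrow(3)$ is free: a function that is locally semiconcave in $\Omega$ is in particular locally the sum of a concave (hence locally Lipschitz) function and a smooth function, so it is locally Lipschitz continuous. The implication $(3)\Rightarrow(1)$ is an immediate consequence of Theorem~\ref{t:nls}: if (MTP) admitted a singular time-optimal trajectory $y^{x_0,u}$, then by Theorem~\ref{t:nls} the function $T$ would fail to be Lipschitz \emph{at} $x_0$, hence a fortiori fail to be locally Lipschitz in any neighbourhood of $x_0$, contradicting (3). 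So the whole content is in $(1)\Rightarrow(2)$.

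For $(1)\Rightarrow(2)$, fix a point $\bar x\in\Omega$ and let $y^{\bar x,u}$ be a time-optimal trajectory with adjoint arc $p$ supplied by Pontryagin's principle; since (1) holds, $y^{\bar x,u}$ is not singular, so by Theorem~\ref{t:sincar} the endpoint $z:=y^{\bar x,u}(T(\bar x))$ lies in $\Gamma\setminus E$, and hence $h(z,p(T(\bar x)))>0$, i.e.\ the quadratic form $\sum_j\langle X_j(z),\cdot\rangle^2$ is nondegenerate transversally to $\Gamma$ near $z$. The plan is: (a) using Theorem~\ref{t:derridj} choose $\delta>0$ with $B_\delta(z)\cap E=\emptyset$, so that near $z$ the eikonal equation \eqref{eq:ico} has a uniformly positive-definite matrix $A(\cdot)$ (after possibly shrinking $\delta$, using compactness of $\overline{B}_\delta(z)\cap\Gamma$ and that $E$ is closed); then the classical viscosity arguments recalled in Section~\ref{Sect:vfmt}, together with the boundary regularity result Theorem~\ref{t:bregularity} of the Appendix, show $T$ is locally semiconcave in a neighbourhood $W$ of $z$, with a semiconcavity constant $C_z$. (b) Propagate this semiconcavity backwards along the trajectory. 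Concretely, one estimates the second difference $T(x+h)+T(x-h)-2T(x)$ for $x$ near $\bar x$ by following near-optimal trajectories from $x\pm h$ and $x$ for time $T(\bar x)-t_0$ (for a suitable $t_0<T(\bar x)$ chosen so that the images land in $W$), applying dynamic programming $T(x)=T(\bar x)-t_0$ translated appropriately, and using the Gronwall-type estimate \eqref{eqa:esttra} together with the $C^2$-dependence of solutions of \eqref{eq:se} on the initial datum. This reduces the second difference of $T$ at $x$ to (semiconcavity constant $C_z$)$\times|h|^2$ plus a term controlled by the $C^2$ norms of the $X_j$ times $|h|^2$, giving a local semiconcavity estimate in a neighbourhood of $\bar x$. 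Since $\bar x\in\Omega$ was arbitrary, and a locally finite bound on every compact set follows by covering, $T$ is locally semiconcave in $\Omega$.

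The main obstacle is step (b): the backward propagation must be done so that the semiconcavity constant stays locally bounded, which is exactly the subtlety flagged in the Introduction as not following from \cite{CR}. The delicate point is that the time $t_0$ after which the trajectory has entered the ``good'' region $W$, and hence the length of trajectory over which one propagates, may vary with $\bar x$; one must check that $t_0$ can be chosen locally uniformly (using continuity of the time-optimal synthesis away from singular trajectories, or a compactness argument on time-optimal trajectories as in the proof of Theorem~\ref{t:nls}, Case~2) and that the constant $C_z$ and the perturbation estimates are uniform over a compact neighbourhood. A further technical wrinkle is that time-optimal controls need not be unique, so the estimate must be phrased so as to hold for \emph{every} time-optimal trajectory through the relevant points, or one fixes a measurable selection; the second-difference computation is robust to this since only one-sided (sub-optimal) comparisons enter.
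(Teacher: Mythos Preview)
Your cycle $(2)\Rightarrow(3)\Rightarrow(1)$ matches the paper exactly (the paper cites Corollary~\ref{t:lipschitz-no-sing} rather than Theorem~\ref{t:nls} directly, but this is the same content). For $(1)\Rightarrow(2)$ your plan is close in spirit but organized differently, and contains one confusion.

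The paper's argument for $(1)\Rightarrow(2)$ is: given $x_0\in\Omega$, use a compactness argument on optimal trajectories to find a smooth compact noncharacteristic submanifold $\Gamma_0\subset\Gamma\setminus E$ and a neighbourhood $V\ni x_0$ such that \emph{every} time-optimal trajectory from \emph{every} $x\in V$ terminates in $\Gamma_0$ (claim~\eqref{eq:K}); then $T|_V$ coincides with the minimum time to the noncharacteristic target $\Gamma_0$, and one invokes the classical semiconcavity result of \cite{CSi1}. The backward propagation along trajectories is thus delegated entirely to the known result; the only new work is the compactness claim.

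Your plan instead fixes a single endpoint $z$, obtains semiconcavity of $T$ in a neighbourhood $W$ of $z$, and propagates directly via second differences. This can be made to work, and the uniformity issue you flag in your last paragraph is exactly what the paper's claim~\eqref{eq:K} resolves. As written, though, your $W$ is built around one endpoint, whereas the second-difference estimate at a point $x$ near $\bar x$ requires using an optimal control \emph{at $x$}, whose endpoint need not lie near $z$; so $W$ must really be a neighbourhood of the (compact) set of all endpoints of optimal trajectories starting near $\bar x$. The paper's ``replace the target by $\Gamma_0$'' device handles this bookkeeping in one stroke and avoids redoing the propagation estimates.

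One genuine error in step~(a): you write that ``near $z$ the eikonal equation \eqref{eq:ico} has a uniformly positive-definite matrix $A(\cdot)$''. This is false: $A(x)=\sum_j X_j(x)\otimes X_j(x)$ has rank at most $N$ everywhere and is degenerate whenever $N<n$, which is the case of interest. ``Noncharacteristic at $z$'' means the $X_j(z)$ are not all tangent to $\Gamma$, not that $A$ becomes elliptic. The semiconcavity of $T$ near $z$ comes from Theorem~\ref{t:bregularity}(1) ($C^\infty$ smoothness near noncharacteristic boundary points), not from the classical nondegenerate viscosity argument you cite.
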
 
\begin{remark}\label{re:Sect4} \em
\begin{itemize}
\item[(i)] The fact that the existence of singular optimal trajectories may destroy
the regularity of a solution of a first order Hamilton-Jacobi equation
was already observed by Sussmann (in an implicit form) in \cite{S}
and (explicitly) by Agrachev in \cite{Ag}. The regularity these authors
consider is  subanalyticity of the point-to-point distance function
associated with real analytic distributions. The aforementioned
subanalyticity results were extended to solutions of the Dirichlet
problem in \cite{Tre}.
\item[(ii)] We observe that, as shown in \cite{CJT},  there exists an open dense set $\mathcal{O}$ in $(VF(\Omega))^N$ (equipped with the $C^\infty$ topology) such that condition (1) in Theorem~\ref{t:semiconcave} holds whenever $(X_1,\ldots , X_N)\in \mathcal{O}$. 
\item[(iii)] In \cite{CR}, the local semiconcavity of the sub-riemannian distance to a point $x_0$, $d_{SR}(x_0,x)$, is proved without giving any  estimate of the dependance on $x_0$ of the semiconcavity constant of $d_{SR}(x_0,\cdot)$. Therefore, the semiconcavity of 
$$T(x)=\min_{y\in \Gamma}d_{SR}(y,x)$$ 
 is not a direct consequence of the result of \cite{CR}.
\end{itemize}
\end{remark}
The proof of Theorem~\ref{t:semiconcave} is based on methods from optimal control. 
\begin{proof}
It is well-known that the local semiconcavity in $\Omega$ of $T$ yields the local  Lipschitz continuity of $T$ in the same set. Thus, in order to prove the theorem, it suffices to show that the following  implications hold true:
\begin{itemize}
\item[(i)] if  (MTP) admits  no singular time-optimal trajectories then $T$ is locally semiconcave in $\Omega$; 
\item[(ii)]  if $T$ is locally Lipschitz continuous in $\Omega$ then (MTP) admits  no singular time-optimal trajectories. 
\end{itemize} 

Now, (ii) follows from Corollary~\ref{t:lipschitz-no-sing}. We therefore proceed to prove (i). First of all, let us recall that for a smooth controlled system the minimum time function is locally semiconcave whenever the target is a noncharacteristic smooth compact manifold. This result can be proved arguing as in \cite{CSi1}. For the reader convenience we give  a  sketch of the proof. It is well-known that since the target $\Gamma$ is a smooth manifold, the Euclidean distance function is smooth on a neighborhood of $\Gamma$. Furthermore, since $\Gamma$ is a noncharacteristic hypersurface, the minimum time function can be estimated from above by the Euclidean distance function in some neighborhood of $\Gamma$ (see \cite[Prop. 2.2]{CSi1}). Owing to these two facts,  the minimum time function inherits the semiconcavity of the distance function in a neighborhood of $\Gamma$. Finally, by standard techniques which use time-optimal trajectories to ``propagate'' regularity, one can show that the semiconcavity of $T$ holds true even away from the target. 

Now, let $x_0\in \Omega$ and consider all the time-optimal trajectories starting at $x_0$. We denote by $\lbrace u_\alpha \rbrace $, for $\alpha$ running in a suitable set of indices $\mathcal{A}(x_0)$, the set of all the optimal controls at  $x_0$. By Theorem~\ref{t:sincar}, we have that $y^{x_0,u_\alpha}(T(x_0))\notin E$, for every $\alpha\in \mathcal{A}$. We claim that there exist a smooth compact submanifold $\Gamma_0\subset \Gamma$, with $\Gamma_0\cap E=\emptyset$,  and a neighborhood of $x_0$, $V\Subset \Omega$, such that 
\begin{equation}
\label{eq:K}
\quad y^{x,u_\alpha}(T(x)) \in \Gamma_0,\quad\forall x\in V\,,\; \forall \,\alpha\in\mathcal{A}(x).
\end{equation}
Indeed, thanks to Theorem \ref{t:derridj}, for any  $x_0\in\Omega$ there exists a smooth compact submanifold $\Gamma_0\subset \Gamma$  such that
\begin{equation}
  \label{eq:k0}
  \begin{cases}
 \; \Gamma_0\cap E=\emptyset 
 \\
 \;y^{x_0,u_\alpha}(T(x_0)) \in \Gamma_0,\quad \forall \alpha\in\mathcal{A}(x_0)
\\
\inf \big\{ | y^{x_0,u_\alpha}(T(x_0))-y|~:~y\in \Gamma\setminus \Gamma_0,\, \alpha\in\mathcal{A}(x_0)\big\} >0.
\end{cases}
\end{equation}
Arguing by contradiction, let us assume that there exist a sequence $x_j\in \Omega$, converging to $x_0$, and a sequence of optimal controls $u_j\in\mathcal{A}(x_j)$ such that $y^{x_j,u_j}(T(x_j))\notin \Gamma_0$. Then, possibly taking a subsequence, we deduce that there exists $\bar{u}\in \mathcal{A}(x_0)$ such that
$$
\lim_{j\to\infty} y^{x_j,u_j}(T(x_j))=y^{x_0,\bar{u}}(T(x_0))
$$
in contradiction with \eqref{eq:k0}. Thus, \eqref{eq:K} holds true.

This shows that the restriction of $T$ to the set $V$ coincides with the restriction to $V$ of the minimum time function 
for the control system \eqref{eq:se} with target $\Gamma_0$. Since $\Gamma_0$ is a smooth compact manifold, by the first part of this proof we deduce that $T$ is semiconcave in $V$. A compactness argument then shows that $T$ is semiconcave on every  set $C\Subset \Omega$, thus completing
the proof of (i).
\end{proof}
Next, we give an example showing that  singular time-optimal trajectories may well occur for the problem of interest to this paper.
\begin{example}
  \label{p:example}
  In $\R^3$ consider  vector fields
  $$
X_1=\partial_{x_1},\qquad X_2= (1-x_1)\partial_{x_2}+x_1^2 \partial_{x_3} .
  $$
Then, there exists a bounded open  set $\Omega \subset \R^3$, with $C^\infty$ boundary, such that the viscosity solution of the Dirichlet problem
  \begin{equation}\label{eq:sl}
    \left\{
    \begin{array}{ll}
(X_1T)^2+ (X_2T)^2=1\quad &\text{ in }\quad \Omega ,
      \\ \\
      T|_{\Gamma}=0,
    \end{array}
    \right .
    \end{equation}
fails to be locally Lipschitz in $\Omega$. 
\end{example}
\begin{proof}

For fixed $a>0$,  consider  the subset of $\R^3$
  $$
\Gamma_a=\big\{ (x_1, x_2, -(x_2-a)^2)~:~x_1\in \R , x_2>0\big\} 
  $$
and the controlled system
\begin{equation}
  \label{eq:contsys}
\left\{
\begin{array}{ll}
  \dot{y}_1=u_1 
  \\
  \dot{y}_2 = u_2 \,(1-y_1)
  \\
  \dot{y}_3 = u_2 \,y_1^2
  \end{array} 
  \right . 
\end{equation}
with initial condition $y(0)=(0,0,0)$.
We denote by $T_{\Gamma_a}$ the minimum time function associated with  system \eqref{eq:contsys} and target $\Gamma_a$.

We claim that, for $a<1$, $T_{\Gamma_a}(0)=a$ and $(0,t,0)$, $t\in [0,a]$, is a singular time-optimal trajectory.
Indeed:
  \begin{itemize}
    \item taking $p(t)=(0,0,-1)$, 
      $y(t)$ can be lifted to a solution of \eqref{eq:hs} (in the characteristic set of the vector fields) satisfying the transversality condition \eqref{eq:transv};
    \item $y(t)$ reaches $\Gamma_a$ in time $a$;
    \item in order to reach the target, starting at the origin, $ \dot{y}_2$ should be positive whilst  $\dot{y}_3$ should be negative (this can be done provided that $1-y_1<0$ but the minimal time, in order to have $1-y_1<0$, is larger than $1$.) 
  \end{itemize}
  Hence, for any $a\in ]0,1[$ we have that $T_{\Gamma_a}(0)=a$.

    In order to complete the proof it remains to define the set $\Omega$.
    In $\R^3$, consider the Euclidean open ball with center at $(0,a,0)$ and radius  $a/2$ and denote such a ball by $B$.
    Then, for every $b\in ]a/2,a[$, 
    $$T_{\Gamma_a}(0,b,0)=T_{\Gamma_a\cap B}(0,b,0)=a-b.$$ 
    It is clear that the hypersurface $\Gamma_a\cap B$ can be smoothly extended outside $B$ and  we can construct a $C^\infty$  hypersurface $\Gamma$ such that
    $\Gamma =\partial\Omega$ (for a suitable open bounded set $\Omega$). For $b\in ]a/2,a[$ small enough we have that $(0,b,0)\in \Omega$ 
    and $T_{\Gamma}(0,b,0)=a-b$. As noted above, this implies that the (MTP) associated with the controlled system \eqref{eq:contsys} and target $\Gamma$ admits a singular time-optimal trajectory. Then, owing to Theorem~\ref{t:semiconcave}, $T_{\Gamma}$ is not a locally Lipschitz function.  
\end{proof}
We complete this section with the study of the regularity of $T$ at the boundary of $\O$. As we did for Lipschitz continuity, we begin by introducing a point-wise notion of H\"older regularity.
\begin{definition}\label{de:holder_at}
 We say that a function $f:\bO\to\R$ is H\"older continuous of exponent $\alpha\in ]0,1]$ at a point $x_0\in\O$ if there exist a neighborhood $U\subset\O$ of $x_0$ and a constant $C\ge 0$ such that 
 \begin{equation*}
|f(x)-f(x_0)|\le C|x-x_0|^\alpha\qquad\forall x\in U\cap\bO.
\end{equation*}

\end{definition}
The following result shows that, for the Dirichlet problem,  the maximal length of commutators $k(\cdot)$ gives the``optimal'' H\"older exponent  at characteristic boundary points. 
\begin{theorem}[Boundary regularity] 
  \label{t:bregularity}
Assume $(H)$. Then:
  \begin{enumerate}
 \item for any $x\in \Gamma \setminus E$,   
  $T$ is $C^\infty$ in a neighborhood of $x$;
\item for any $x\in E$,  
  $T$ is H\"older (not Lipschitz)  continuous at $x$ of
  exponent $1/k(x)$, with $k(x)$ given by \eqref{de:k(x)}.  
\end{enumerate}
 \end{theorem}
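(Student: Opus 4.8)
\textbf{Proof proposal for Theorem~\ref{t:bregularity}.}

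The plan is to treat the two statements separately, the first by a classical noncharacteristic Hamilton--Jacobi argument and the second by a two-sided estimate combining an upper bound from a Baker--Campbell--Hausdorff/chained-controllability construction with a lower bound from nonsmooth analysis.

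For item (1): if $x\in\Gamma\setminus E$, then by definition the linear span of $X_1(x),\ldots,X_N(x)$ is \emph{not} contained in $T_x\Gamma$, so the Hamiltonian $h(y,p)$ is nonvanishing on the conormal directions to $\Gamma$ near $x$; in particular the eikonal equation in \eqref{eq:sosq} is noncharacteristic at $x$ in the usual sense. I would first invoke the estimate $T(y)\le C\,\dist(y,\Gamma)$ valid near $x$ (this is exactly \eqref{eq:distest}, cf.\ \cite[Prop.~2.2]{CSi1}), together with the trivial lower bound $T(y)\ge c\,\dist(y,\Gamma)$ coming from the bounded speed of the control system, to conclude $T$ is comparable to a smooth function and hence Lipschitz near $x$. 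Then the method of characteristics for the Hamiltonian system \eqref{eq:hs}: since $h\neq 0$ on the relevant conormal, one solves the characteristic ODEs with smooth Cauchy data on $\Gamma$ (the data being $T=0$, $p=$ the appropriately normalized conormal so that $h(x,p)=1$), obtains a local smooth flow, and checks the projection is a diffeomorphism onto a one-sided neighborhood of $x$ in $\bO$ by a transversality/implicit-function-theorem argument. Uniqueness of the viscosity solution then forces $T$ to coincide with this smooth function near $x$, giving $C^\infty$ regularity. The only subtlety is the one-sidedness at the boundary, handled by extending the vector fields slightly outside $\bO$ (allowed by (H2)).

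For item (2): fix $x\in E$. The upper bound $T(y)\le C\,|y-x|^{1/k(x)}$ for $y$ near $x$ follows from the small-time local controllability estimates based on the Baker--Campbell--Hausdorff formula of \cite{NSW}: one can steer from $y$ to $\Gamma$ (indeed to $x$) in time $O(\dist_{SR}(y,\Gamma))$, and $\dist_{SR}$ near $x$ is bounded by a constant times the Euclidean distance to the power $1/k(x)$ because $\mathrm{Lie}^{k(x)}(\{X_i\})[x]=\R^n$ while brackets of length $<k(x)$ do not yet fill all directions transverse to $\Gamma$; the characteristic condition $x\in E$ is what makes the ``missing'' Euclidean direction the normal direction to $\Gamma$, which is the slow one. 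For the matching lower bound $T(y)\ge c\,|y-x|^{1/k(x)}$ along the normal ray, the idea is dual: if $T$ were H\"older of a better exponent at $x$, one could extract a nonzero horizontal proximal supergradient $p\in\hps T(x)$ contradicting the ball condition, or more directly, one uses that any admissible trajectory reaching $\Gamma$ from a point $y=x+\rho\nu(x)$ must, by integrating the control system, move a distance $\gtrsim\rho$ in the normal direction, and the normal component of $\sum u_j X_j$ vanishes to order $k(x)-1$ at $x$ (precisely because $x$ is characteristic and $k(x)$ is the step), forcing the transfer time to be $\gtrsim\rho^{1/k(x)}$ — a Gronwall-type estimate applied to the normal coordinate. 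Finally, that $T$ is \emph{not} Lipschitz at $x$ is immediate from Theorem~\ref{t:sincar} (the time-optimal trajectory to $x\in E$ is singular) together with Theorem~\ref{t:nls}.

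The main obstacle I expect is the sharp \emph{lower} bound in item (2): getting the exponent $1/k(x)$ exactly, rather than merely some $1/r_\Omega$, requires a careful local analysis of how fast the accessible set grows in the normal direction near a characteristic point, i.e.\ showing that the normal direction is generated at step exactly $k(x)$ and not earlier. This is where one must quantify the vanishing order of the normal component of the horizontal vector fields along $\Gamma$ near $x$ and feed it into a differential inequality for the normal coordinate of trajectories; the upper bound and the non-Lipschitz assertion, by contrast, follow fairly directly from results already established (\cite{NSW}, Theorems~\ref{t:sincar} and \ref{t:nls}).
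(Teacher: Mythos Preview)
Your treatment of item~(1) is essentially the paper's argument: noncharacteristic Cauchy data, solve by characteristics, identify with $T$ by viscosity uniqueness. No issue there.

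For item~(2) there are two problems, one a misreading of the statement and one a genuine gap.

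\textbf{The lower bound is not needed.} The theorem asserts only that $T$ is H\"older of exponent $1/k(x)$ at $x$ and that $T$ is \emph{not Lipschitz} at $x$; it does \emph{not} claim that $1/k(x)$ is the sharp H\"older exponent. So the ``main obstacle'' you identify --- a matching lower bound $T(y)\ge c\,|y-x|^{1/k(x)}$ along the normal --- is not part of the statement and the paper does not attempt it. Your upper-bound argument (steer from $y$ to the single point $x$, use the Nagel--Stein--Wainger/BCH estimate $T_{\{x\}}(y)\le C|y-x|^{1/k(x)}$, then $T\le T_{\{x\}}$) is exactly what the paper does, made precise via the viscosity comparison principle on a small neighborhood.

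\textbf{The non-Lipschitz argument has a gap.} You invoke Theorems~\ref{t:sincar} and~\ref{t:nls}, but both are stated for $x_0\in\Omega$, i.e.\ interior points; here $x\in E\subset\Gamma$. Theorem~\ref{t:nls} says $T$ fails to be Lipschitz at the \emph{starting} point of a singular optimal trajectory, not at its terminal point on $\Gamma$. Moreover, nothing guarantees that a given characteristic point $x\in E$ is actually reached by some time-optimal trajectory from the interior, so you cannot even produce a singular trajectory ending at $x$ in general. (Note also that Proposition~\ref{t:sing-l}, which might tempt you to take a limit, itself relies on Theorem~\ref{t:bregularity}, so that route is circular.)

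The paper's argument for ``not Lipschitz at $x_0\in E$'' is direct and avoids all of this: assume for contradiction that $T(x)\le C|x-x_0|$ near $x_0$. By Derridj, pick $x_j\in\Gamma\setminus E$ with $x_j\to x_0$; by part~(1), $T$ is smooth near each $x_j$, and since $T\equiv 0$ on $\Gamma$ one has $DT(x_j)=\langle DT(x_j),\nu(x_j)\rangle\,\nu(x_j)$. The assumed Lipschitz bound forces $|DT(x_j)|$ to stay bounded, so along a subsequence $DT(x_j)\to c\,\nu(x_0)$; but then $1=h(x_j,DT(x_j))\to c^2\,h(x_0,\nu(x_0))=0$ because $x_0\in E$, a contradiction. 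This is the missing idea.
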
 
\noindent
Even though part of the above conclusions may be known to the reader,  for completeness we provide a proof of Theorem~\ref{t:bregularity} in Appendix~\ref{a:b}.

In order to state a more general regularity result, we introduce the set
$$
\sing_L T =\Big\{ x\in \overline{\O}~:~\limsup_{\bO\ni y\to x}\frac{|T(y)-T(x)|}{|y-x|}=\infty \Big\}
$$
which consists of all points at which $T$ fails to be Lipschitz.%
\begin{proposition}\label{t:sing-l}
  Assume  $(H)$ and let $T$ be the viscosity solution of \eqref{eq:sosq}.
  Then, $\sing_LT$ is a closed set. 
  \end{proposition}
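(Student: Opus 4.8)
The plan is to show that the complement of $\sing_L T$ in $\bO$ is open. Fix $x_0\in\bO\setminus\sing_L T$, so that $T$ is Lipschitz at $x_0$ with some constant $L$ and some radius $\rho>0$, i.e. $|T(y)-T(x_0)|\le L|y-x_0|$ for all $y\in B_\rho(x_0)\cap\bO$. I want to produce a neighborhood $U$ of $x_0$ such that every $x\in U\cap\bO$ is again a point of Lipschitz continuity of $T$ (with possibly larger constant and smaller radius). The natural dichotomy is whether $x_0$ lies in the interior $\O$ or on the boundary $\Gamma$.

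First consider $x_0\in\O$. By Theorem~\ref{t:nls}, Lipschitz continuity of $T$ at $x_0$ is equivalent to the absence of singular time-optimal trajectories issuing from $x_0$; by Theorem~\ref{t:sincar}, this means every time-optimal trajectory from $x_0$ reaches $\Gamma$ at a point of $\Gamma\setminus E$. I would then argue exactly as in the proof of \eqref{eq:K} inside the proof of Theorem~\ref{t:semiconcave}: using Theorem~\ref{t:derridj} together with a compactness/contradiction argument on the set $\mathcal{A}(x_0)$ of optimal controls, one finds a smooth compact submanifold $\Gamma_0\subset\Gamma$ with $\Gamma_0\cap E=\emptyset$ and a neighborhood $V\Subset\O$ of $x_0$ such that $y^{x,u_\alpha}(T(x))\in\Gamma_0$ for all $x\in V$ and all $\alpha\in\mathcal{A}(x)$. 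Hence on $V$ the function $T$ coincides with the minimum time function for the noncharacteristic target $\Gamma_0$, which is locally semiconcave, hence locally Lipschitz on $V$; in particular every point of $V$ belongs to $\bO\setminus\sing_L T$. This shows $\O\setminus\sing_L T$ is open in $\O$.

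Next consider $x_0\in\Gamma$. Since $x_0\notin\sing_L T$ and, by Theorem~\ref{t:bregularity}(2), every point of $E$ is a non-Lipschitz point, we must have $x_0\in\Gamma\setminus E$. By Theorem~\ref{t:bregularity}(1), $T$ is then $C^\infty$ — in particular Lipschitz — on a full neighborhood $W$ of $x_0$ in $\R^n$; hence $W\cap\bO\subset\bO\setminus\sing_L T$. Combining the two cases, $\bO\setminus\sing_L T$ is open in $\bO$, so $\sing_L T$ is closed.

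The only genuinely delicate point is the interior case, and there the work is essentially already done in the proof of Theorem~\ref{t:semiconcave}: one must be careful that the set-valued map $x\mapsto\mathcal{A}(x)$ of optimal controls is upper semicontinuous (which follows from continuity of $T$, equiboundedness of the controls, and closedness of the trajectory map), so that limits of optimal trajectories from $x_j\to x_0$ are optimal trajectories from $x_0$ — this is what powers the contradiction argument yielding $\Gamma_0$ and $V$. No new estimate is needed; the proposition is a packaging of Theorems~\ref{t:nls}, \ref{t:sincar}, \ref{t:derridj}, \ref{t:bregularity}, and the localization step from the proof of Theorem~\ref{t:semiconcave}.
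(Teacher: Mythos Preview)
Your proof is correct, but it takes a different route from the paper's. The paper argues \emph{directly}: given $x_j\in\sing_L T$ with $x_j\to x$, it invokes Theorem~\ref{t:nls} at each $x_j$ to produce a singular time-optimal trajectory $y^{x_j,u_j}$, passes to a uniformly convergent subsequence to obtain an optimal trajectory $y^{x,u}$, and checks (via Theorem~\ref{t:sincar} and the closedness of $E$) that the limit trajectory is again singular, so $x\in\sing_L T$; the boundary case $x\in\Gamma$ is handled by observing that the terminal points $y^{x_j,u_j}(T(x_j))\in E$ converge to $x$, forcing $x\in E$ and concluding with Theorem~\ref{t:bregularity}(2). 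You instead prove openness of the complement, and for the interior case you import the full localization argument from the proof of Theorem~\ref{t:semiconcave} (the construction of $\Gamma_0$ and the neighborhood $V$). That works, since the contradiction argument around \eqref{eq:K} only uses that all optimal trajectories from the single point $x_0$ terminate away from $E$, together with upper semicontinuity of the optimal-control map---exactly as you note. The trade-off is this: the paper's sequential argument is shorter and uses less machinery (no $\Gamma_0$, no semiconcavity of the auxiliary minimum time), whereas your argument actually proves more---local semiconcavity of $T$ on $\bO\setminus\sing_L T$---which is precisely the content of Theorem~\ref{t:general-t} (whose proof the paper omits). So your approach effectively merges Proposition~\ref{t:sing-l} and Theorem~\ref{t:general-t} into one step.
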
 
\begin{proof}
Let $x_j\in \sing_L T$, $j\in \N$, be a sequence of points converging to $x\in \O$. We claim that $x\in \sing_L T$. Indeed, since $T$ fails to be Lipschitz at $x_j$, we have that there exist control functions $u_j$ such that:
  \begin{enumerate}
    \item $y^{x_j,u_j}$ is a singular time-optimal trajectory (this is a consequence of Theorem~\ref{t:nls}); 
\item possibly taking a subsequence, $y^{x_j,u_j}$ converges uniformly to an optimal trajectory $y^{x,u}$. 
  \end{enumerate}
  We claim that $y^{x,u}$ is singular. 
  Indeed, Theorem~\ref{t:derridj} implies that $E$ is a closed set and $y^{x_j,u_j}(T(x_j))\in E$, by Theorem~\ref{t:sincar}.
  Hence, we find that 
  $$
  E\ni \lim_{j\to \infty }y^{x_j,u_j}(T(x_j))=y^{x,u}(T(x)).
  $$
So, by using once again Theorem~\ref{t:sincar}, we deduce that $y^{x,u}$ is a singular time-optimal trajectory. 

In order to complete the proof we observe that, if $x\in \Gamma$, then
$$
x=\lim_{j\to \infty}y^{x_j,u_j}(T(x_j))\in E
$$
and the conclusion follows by Theorem~\ref{t:bregularity}(2).
\end{proof} 
Our next result fully describes the regularity of the solution to \eqref{eq:sosq}.
\begin{theorem}\label{t:general-t}
  Assume $(H)$ and let $T$ be the viscosity solution of  \eqref{eq:sosq}.
  Then, $T$ fails to be Lipschitz at all points of  the closed set $\sing_L T$ and is semiconcave on the relatively open set $\overline{\O }\setminus \sing_L T$. 
  \end{theorem}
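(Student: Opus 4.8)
\textbf{Proof plan for Theorem~\ref{t:general-t}.}
The statement has two parts, and both are essentially already in hand once the earlier results are assembled. The plan is to prove the first part (failure of Lipschitz continuity on all of $\sing_L T$) as a tautology combined with closedness, and to prove the second part (semiconcavity on $\overline\O\setminus\sing_L T$) by localizing and quoting the implication (1)$\Rightarrow$(2) from Theorem~\ref{t:semiconcave}, applied not to $\O$ but to suitable open subsets on which no singular trajectory starts.

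First, for the Lipschitz failure: by the very definition of $\sing_L T$, at every $x\in\sing_L T$ the difference quotient $\limsup_{\bO\ni y\to x}|T(y)-T(x)|/|y-x|$ is infinite, so $T$ is not Lipschitz at any such point; the only substantive claim here is that $\sing_L T$ is closed, which is exactly Proposition~\ref{t:sing-l}. Hence $\overline\O\setminus\sing_L T$ is relatively open in $\overline\O$, and it remains to establish semiconcavity there.

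For the semiconcavity part, fix $x_0\in\overline\O\setminus\sing_L T$ and let me argue separately according to whether $x_0\in\O$ or $x_0\in\Gamma$. If $x_0\in\Gamma$, then $x_0\notin\sing_L T$ forces $x_0\notin E$ (by Theorem~\ref{t:bregularity}(2), every characteristic point lies in $\sing_L T$), and then Theorem~\ref{t:bregularity}(1) gives that $T$ is $C^\infty$, hence semiconcave, in a neighborhood of $x_0$ in $\overline\O$. If $x_0\in\O$, the key point is to produce an open neighborhood $W\Subset\O$ of $x_0$ that is disjoint from $\sing_L T$; this is possible precisely because $\sing_L T$ is closed and $x_0\notin\sing_L T$. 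By Theorem~\ref{t:nls}, no point of $W$ is the starting point of a singular time-optimal trajectory of (MTP) with target $\Gamma$. Now I want to rerun the proof of implication (i) of Theorem~\ref{t:semiconcave} with $W$ in place of a global hypothesis: for each $x\in W$ and each optimal control $u_\alpha$ at $x$, the endpoint $y^{x,u_\alpha}(T(x))$ avoids $E$ (Theorem~\ref{t:sincar}); by the compactness/limit argument already used in the proof of Theorem~\ref{t:semiconcave}, shrinking $W$ if necessary, there is a smooth compact noncharacteristic submanifold $\Gamma_0\subset\Gamma$ that captures all these endpoints uniformly for $x$ near $x_0$, so on a small neighborhood $V\subset W$ of $x_0$ the function $T$ coincides with the minimum time function for the same control system but target $\Gamma_0$. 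Since $\Gamma_0$ is smooth, compact and noncharacteristic, that minimum time function is locally semiconcave (the $\Gamma_0$-target semiconcavity result recalled in the proof of Theorem~\ref{t:semiconcave}, proved as in \cite{CSi1}), hence $T$ is semiconcave on $V$. A standard covering argument then upgrades this to semiconcavity on every compact subset of $\overline\O\setminus\sing_L T$, which is the assertion.

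The main obstacle is not conceptual but bookkeeping: one must check that all the steps in the proof of Theorem~\ref{t:semiconcave}(i) that were stated for the global no-singular-trajectory hypothesis remain valid when that hypothesis is only known on an open subset $W$ and the target is still the full $\Gamma$. Concretely, the compactness argument producing $\Gamma_0$ used that optimal endpoints from $x_0$ avoid $E$; here we additionally need that optimal endpoints from all nearby $x\in W$ avoid $E$, which is exactly what Theorem~\ref{t:sincar} plus $W\cap\sing_L T=\emptyset$ plus Theorem~\ref{t:nls} provide. One should also be slightly careful at boundary points of $\overline\O\setminus\sing_L T$ that lie on $\Gamma$: semiconcavity there is to be understood in the relative sense (up to the boundary), and it is delivered by the $C^\infty$ conclusion of Theorem~\ref{t:bregularity}(1) rather than by the control-theoretic argument, so the two cases $x_0\in\O$ and $x_0\in\Gamma$ genuinely need to be handled by different tools and then glued.
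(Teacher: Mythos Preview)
Your proposal is correct and follows essentially the same approach the paper indicates: the paper omits the proof, stating only that it is ``similar to the one of Theorem~\ref{t:semiconcave}'', and you have faithfully reconstructed that localization argument (no singular trajectory from points of an open $W\subset\O\setminus\sing_L T$, hence the endpoint-capturing submanifold $\Gamma_0$ and the reduction to a noncharacteristic target). Your explicit treatment of boundary points $x_0\in\Gamma\setminus\sing_L T$ via Theorem~\ref{t:bregularity}(1) is a sensible and necessary complement to cover semiconcavity up to $\Gamma$, which the hint alone does not address.
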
 
\noindent
The proof of Theorem~\ref{t:general-t} is similar to the one of Theorem~\ref{t:semiconcave} and is omitted. 
\begin{remark}\label{t:sslip}\em
An immediate consequence of the above result is that $\sing_L T\setminus E$ coincides with the Lipschitz singular support of $T$. Indeed,  $x\in \Omega\setminus \sing_L T$ if and only if there exists a neighborhood of $x$, $V$, such that $T$ is Lipschitz continuous on $V$. 
  \end{remark}
\subsection{Sufficient conditions for  regularity}\label{Sect:suff}
In this section, we give conditions to guarantee the absence of
singular time-optimal trajectories. Let us recall that, in canonical coordinates\footnote{More in general a symplectic form in a $C^\infty$ manifold is a non-degenerate, closed $C^\infty$ two form.}, the symplectic form
in $T^*\Omega$ is the $2$-form  
\begin{equation}
\label{eq:sigma}
\sigma =\sum_{k=1}^n dp_k\wedge dx_k .
\end{equation}
Furthermore, for any $\rho\in T^*\Omega$,  given a vector space $W\subset
T_\rho (T^*\Omega)$ we denote by $W^\sigma$ the symplectic orthogonal
to $W$, i.e. 
$$
W^\sigma =\{ v\in T_\rho (T^*\Omega)~:~\sigma (v,w)=0\,,\; \forall w\in W\}. 
$$
Finally, we say that a manifold $M\subset T^*\Omega$ is \emph{symplectic} if
the restriction of $\sigma$ to $M$ is nondegenerate, i.e. 
\begin{equation*}
T_\rho M \cap (T_\rho M)^\sigma =\{ 0\}\qquad\forall \,\rho\in M.
\end{equation*}
We have the following result.
\begin{theorem}
  \label{t:nstot}
 Assume (H). Then {\em (MTP)}  admits no singular time-optimal
  trajectory if any of the following conditions is satisfied:
  \begin{itemize}
\item[(i)] $\Gamma$ is noncharacteristic (i.e. $E=\emptyset$);  
    \item[(ii)]  $\Char (X_1,\ldots ,X_N)$ is a symplectic manifold. 
\end{itemize}
  \end{theorem}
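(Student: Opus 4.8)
The plan is to rule out singular time-optimal trajectories in each case by contradiction, exploiting the lifted Hamiltonian description from Definition~\ref{optimal_triple} (equivalently, parts (iii) of the Remark following it). Suppose $y^{x,u}$ is a singular time-optimal trajectory with dual arc $p(\cdot)$, so that $(y(t),p(t))\in\Char(X_1,\dots,X_N)$ for all $t\in[0,T(x)]$, the pair solves the Hamiltonian system~\eqref{eq:hs}, and the transversality condition~\eqref{eq:transv} holds at the endpoint.

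Case (i) is essentially immediate from Theorem~\ref{t:sincar}: if $y^{x,u}$ is singular then $y^{x,u}(T(x))\in E$, but $E=\emptyset$ by hypothesis, a contradiction. So there is nothing left to do here beyond citing Theorem~\ref{t:sincar}; I would state it in one line.

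Case (ii) is the substantive one, and I expect it to be the main obstacle. The idea is that a singular trajectory, lifted to $\Char(X_1,\dots,X_N)$, must have its velocity $(y'(t),p'(t))$ simultaneously tangent to the manifold $M=\Char(X_1,\dots,X_N)$ and symplectically orthogonal to it. Concretely: the constraints $\langle X_k(y(t)),p(t)\rangle=0$ for $k=1,\dots,N$ defining $M$ (locally) mean that differentiating them along the trajectory forces $(y',p')\in (T_\rho M)^\sigma$ — this is because the functions cutting out $\Char$ are precisely the $X_k(x,p)$, whose Hamiltonian vector fields span (a complement related to) $(T_\rho M)^\sigma$, and the defining relation $H(x,p,u)=\sum_j u_j X_j(x,p)$ makes $(y',p')=X_H(y,p)$ a combination of exactly those Hamiltonian vector fields. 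On the other hand $(y',p')$ is tangent to $M$ since the trajectory stays in $M$. If $M$ is symplectic, $T_\rho M\cap(T_\rho M)^\sigma=\{0\}$, so $(y'(t),p'(t))=0$ for a.e.\ $t$; in particular $p(\cdot)$ is constant and $y(\cdot)$ is stationary, contradicting that $y^{x,u}$ is a time-optimal trajectory reaching $\Gamma$ from an interior point $x\in\Omega$ in positive time $T(x)>0$ (indeed $y'\equiv 0$ forces $y\equiv x\in\Omega$, which never meets $\Gamma$). The delicate points I would need to handle carefully are: (a) $\Char$ as defined in~\eqref{eq:char} lives in $\Omega'\times(\R^n\setminus\{0\})$ and is conical in $p$, so strictly it is not a manifold without quotienting by dilations or restricting; I would either work on the projectivized bundle or, more simply, observe that the statement "$\Char$ is a symplectic manifold" should be read with the $0$-section removed and note that the dilation vector field issue does not interfere because $X_H$ has the right homogeneity; (b) justifying that the Hamiltonian vector fields $X_{X_k}$ span $(T_\rho M)^\sigma$ requires that $dX_1,\dots,dX_N$ be independent along the trajectory, which is exactly where some nondegeneracy of $M$ as a manifold is used, and I would extract this from the symplectic hypothesis together with $p\neq 0$; (c) the measurability/a.e.\ bookkeeping for $u(\cdot)$, which is routine. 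I would organize the write-up as: first reduce to showing $(y',p')=0$ a.e.; then the linear-algebra lemma $T_\rho M\cap(T_\rho M)^\sigma=\{0\}\Rightarrow$ any vector both tangent and symplectically normal vanishes; then identify $X_H(y,p)$ as lying in both subspaces; then conclude.
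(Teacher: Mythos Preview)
Your proposal is correct and matches the paper's approach: case (i) is dispatched via Theorem~\ref{t:sincar}, and for case (ii) the paper likewise argues that the lifted velocity $X_H=\sum_j u_j X_{X_j}$ lies in both $T_\rho\Char$ (since the curve stays in $\Char$) and $(T_\rho\Char)^\sigma$, so the symplectic hypothesis forces it to vanish, contradicting $T(x)>0$. One minor simplification: you only need each $X_{X_k}\in(T_\rho M)^\sigma$, which is immediate from $\sigma(X_{X_k},v)=dX_k(v)=0$ for $v\in T_\rho M$ --- spanning is unnecessary, and this dissolves your concern~(b).
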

\begin{proof}
For noncharacteristic $\Gamma$  the conclusion follows from Theorem~\ref{t:sincar}.

Let us assume that $\Char(X_1,\dots,X_N)$ is a symplectic manifold.
We have that 
$$
\Span \{ dX_1(x,p),\ldots , dX_N(x,p)\} \perp T_{(x,p)} \Char (X_1,\ldots ,X_N) 
$$
(here ``$\perp$'' stands for  orthogonality w.r.t. the Euclidean scalar product).  
Hence, we find 
$$
(\Span \{ dX_1(x,p),\ldots , dX_N(x,p)\})^\sigma \subset (T_{(x,p)} \Char (X_1,\ldots ,X_N))^{\sigma} .   
$$
On the other hand, since $\Char (X_1,\ldots ,X_N)$ is a symplectic manifold, using  the identity
$$
T_{(x,p)} \Char (X_1,\ldots ,X_N)\cap (T_{(x,p)} \Char (X_1,\ldots ,X_N))^{\sigma} =\{ 0\},
$$
we conclude that either the broken Hamiltonian flow \eqref{eq:hs}  is transveral to $ \Char (X_1,\ldots ,X_N)$, or it is a stationary flow, i.e., there are no singular time-optimal trajectories. This completes our proof.
\end{proof}
\begin{remark}\label{rem:sub}\em
 As a consequence of our results and \cite[Theorem~3.4]{Tre}, assuming that $\{X_1,\ldots ,X_N\}$ are real analytic and  $\partial\Omega$ is a real analytic submanifold, we have that, if $\Char (X_1,\ldots ,X_N)$ is a symplectic manifold, then $T$ is a subanalytic function in $\Omega$ (see, e.g. \cite{Ta}, for the definition and  basic properties of  subanalytic sets and functions). 
\end{remark}

\subsection{Examples} \label{Sect:examples}
In this section, we discuss problems to which Theorem~\ref{t:nstot} can be directly applied as well as examples where the assumptions of Theorem~\ref{t:nstot} are not fulfilled, but one can still use Theorem~\ref{t:semiconcave} to study the regularity of $T$. 

In the example below we consider  Heisenberg vector fields and use Theorem \ref{t:nstot} to show that this problem admits no singular time-optimal trajectory. It is worth no\-ting that the same conclusion  follows from the fact that such vector fields are strongly bracket generating. 
\begin{example}[Heisenberg vector fields] 
\label{p:heisenberg}
 In $\R^3$ consider vector fields
$$
X_1=\partial_{x_1},\quad  X_2=\partial_{x_2}+x_1\partial_{x_3}
$$
and let $\Omega$ be a bounded open  set with $C^\infty$ boundary. 
We have that
$$
\Char (X_1,X_2)=\big\{ (x_1,x_2,x_3,0,-x_1 p_3,p_3):(x_1,x_2,x_3)\in \Omega , \,p_3\neq 0\big\}
$$
is a smooth submanifold of $\R^6$ of codimension $2$. Furthermore, the restriction of $\sigma$ to $\Char (X_1,X_2)$ is nondegenerate, i.e. $\Char (X_1,X_2)$ is a symplectic manifold. Then, by Theorem~\ref{t:nstot} (ii), we conclude that  (MTP) has no singular time-optimal trajectory.

\end{example}

Since we consider a boundary value problem for the eikonal equation,
due to the interaction of the boundary of $\Omega$
with $\Char (X_1,\ldots ,X_N)$, even systems of vector fields admitting,
in general, singular time-optimal trajectories may have a better
behaviour when $\Omega$ enjoys specific properties. In this regard, we consider once more the system of vector fields
in Example~\ref{p:example}. We point out that such a system,  
introduced in \cite{LS}, admits, in general, strictly abnormal geodesics---in the language of the geometric control theory. We will show that, whenever $\Omega$ is convex, we can exclude the occurrence of singular time-optimal trajectories. 
\begin{example}
\label{p:gmartinet}
In $\R^3$ consider vector fields
$$
X_1=\partial_{x_1},\quad  X_2=(1-x_1)\partial_{x_2}+x_1^2\partial_{x_3}
$$
and let $\Omega$ be a bounded  convex  open set with $C^\infty$ boundary. Then,  (MTP)
admits no singular time-optimal trajectory. 
\end{example}
\begin{proof} We argue by contradiction assuming the existence of a singular time-optimal trajectory.
Let us write the two systems appearing in \eqref{eq:hs} in this specific case. We have
\begin{equation}
  \label{eq:hsmx}
  \left\{
  \begin{array}{l}
    \dot{y}_1=u_1 \\
    \dot{y}_2=u_2 (1-y_1) \\
    \dot{y}_3=u_2y_1^2  
  \end{array}
  \right.
  \quad 
  \left\{
  \begin{array}{l}
    \dot{p}_1= u_2 (p_2-2y_1 p_3) \\
    \dot{p}_2=0 \\
    \dot{p}_3=0. 
  \end{array}
  \right .
  \end{equation}
The only possible initial point $\bar{x}\in \Omega$  of  a singular time-optimal trajectory must be of the form $(0,\bar{x}_2,\bar{x}_3)$, $\bar{x}_{2}, \bar{x}_{3}\in\mathbb{R}$. Then  the corresponding 
solution of \eqref{eq:hsmx} is given by 
$$
(y(t),p(t)):=  \Big(0,\bar{x}_2+\int_0^t u_2(s), \bar{x}_3 ,0,0,\bar{p}_3\Big)
$$
where $(0,\bar{x}_2, \bar{x}_3, 0,0,\bar{p}_3)$ are the ``initial'' conditions for the Hamiltonian system in
\eqref{eq:hsmx}. Let $x_2^{*}$ be such that
$
y(T)=(0,x_2^*,\bar{x}_3 )
$
and note that $(0,x_2^*,\bar{x}_3 )\in \Gamma$ is a characteristic point.
Since $\Omega$ is a  smooth convex set, there exists a smooth function $\Phi =\Phi (x_1,x_2,x_3)$ such that
\begin{equation*}
 \overline{\Omega}\cap B_\delta ( 0,x^*_2,\bar{x}_3)=\big\{ (x_1,x_2,x_3)\in  B_\delta ( 0,x^*_2,\bar{x}_3):\Phi (x_1,x_2,x_3)\le 0\big\}
\end{equation*}
and
\begin{multline*}
  \partial \Omega \cap B_\delta ( 0,x^*_2,\bar{x}_3)
  \\
  =\big\{ (x_1,x_2,x_3)\in  B_\delta ( 0,x^*_2,\bar{x}_3):\Phi (x_1,x_2,x_3)= 0\,,\;D\Phi (x_1,x_2,x_3)\not= 0\big\}
\end{multline*}
for a suitable $\delta >0$. Now, the transversality condition \eqref{eq:transv} in Pontryagin's maximum principle implies that 
$
D\Phi (y(T))=\lambda (0,0,1),
$
for some $\lambda\not= 0$. In particular, we deduce that 
$
\partial_{x_3}\Phi (y(T))\not=0. 
$
By appealing to the implicit function theorem and convexity assumption, we can suppose that, near $(0,x^*_2, \bar{x}_3)$, $\Omega$ is the hypograph of a smooth  concave function $(x_1,x_2)\mapsto \phi (x_1 , x_2)$, which attains its maximum at $(0,x^*_2)$ and such that $\phi (0,x^*_2)= \bar{x}_3$. This implies that $(0,x^*_2, \bar{x}_3)$ cannot be the final point of a trajectory starting from $\O$ because the line segment $(0,x^*_2+s,\bar{x}_3)$, $s\in \R$, can at most belong to the boundary of $\O$. We have thus reached a contradiction.
\end{proof}

As shown in the next example, in some cases, even if the characteristic set is not a symplectic manifold but it can be splitted into a disjoint union of symplectic submanifolds, our approach can be applied. 
\begin{example}
Let $\Omega\subset \R^3$ be a
bounded open  set with smooth boundary and let $k$ be a positive integer. In $\R^3$, consider vector fields
$$X_1=\partial_{x_1}-x_2^{2k+1}\partial_{x_3}\quad\mbox{and}\quad
X_2=\partial_{x_2}+x_1^{2k+1}\partial_{x_3}.$$ 
Then no singular time-optimal trajectories exists.
\end{example}
\begin{proof}
The characteristic set is given by
\begin{multline*}
  \Char (X_1,X_2)
  \\
  =\big\{ (x_1,x_2,x_3, x_2^{2k+1}p_3 ,  -x_1^{2k+1}p_3 ,p_3 )~:~x_1,x_2,x_3\in \R , \, p_3 \not= 0\big\}.
\end{multline*}
Therefore,  $ \Char (X_1,X_2)$ can be split into the connected submanifolds 
\begin{multline*}
\Sigma_{1,\pm} =\big\{ (x_1,x_2,x_3, x_2^{2k+1}p_3 ,  -x_1^{2k+1}p_3 ,p_3 )~:
\\
 x_1,x_2,x_3\in \R , \; (x_1,x_2)\not=(0,0),\; \pm p_3 > 0\big\}
\end{multline*}
and
$$
\Sigma_{2,\pm} =\big\{ (0,0,x_3, 0 ,  0 ,p_3 )~:~ x_3\in \R , \; \pm p_3 > 0\big\}.
$$
Moreover, all these submanifolds are symplectic (the rank of the symplectic form is constant and the symplectic form is nondegenerate on these sets).
So, there is no singular time-optimal trajectory. 
\end{proof}

\noindent
Observe that, in the above example, the characteristic set is a manifold of condimension $2$ in $\R^3$, but the rank of the symplectic form is not constant, i.e., $\Char (X_1,X_2)$ is not a symplectic manifold.  Then, since we are not assuming that $E=\emptyset$, none of the conditions in Theorem~\ref{t:nstot} is satisfied.

We complete this section with an application of Theorem~\ref{t:general-t}.
\begin{example}
  Let $\Omega\subset $ be a bounded open  set with smooth boundary and consider  vector fields
  $$
X_1=\partial_{x_1},\quad X_2=(1-x_1)\partial_{x_2}+x_1^2\partial_{x_3}.
  $$
Then, $\sing_L T$ is a set of measure zero and  $T$ has a second order Taylor expansion at a.e. point of $\Omega$.   
\end{example} 
\begin{proof}
  We observe that, if we show that $\sing_L T$ is a set of measure zero, then $T$ turns out to be semiconcave in $\Omega \setminus \sing_L T$  by Theorem~\ref{t:general-t}. So, the conclusion follows by the Alexandrov theorem on twice differentiability of a convex function.
  Hence, let us show that $\sing_L T$ is a set of measure zero.
  We observe that all singular time-optimal trajectories can be lifted to the characteristic set
  \begin{multline*}
\Char (X_1,X_2)=\big\{ (x_1,x_2,x_3, 0, p_2,p_3)~:~(x_1,x_2,x_3)\in \Omega ,
\\(1-x_1)p_2+x_1^2p_3=0,\quad p_3\not= 0\big\} . 
 \end{multline*}
On the other hand, the characteristic set can be decomposed as follows
$$
\Char (X_1,X_2)=V_1\cup V_2 \cup V_3, 
$$
with 
\begin{multline*}
V_1=\big\{ (x_1,x_2,x_3, 0, p_2,p_3)~:~(x_1,x_2,x_3)\in \Omega ,
\\
(1-x_1)p_2+x_1^2p_3=0\quad -p_2+2x_1p_3\not=0\big\},
\end{multline*}
$$
V_2=\big\{(2,x_2,x_3,0,4p_3,p_3)~:~(2,x_2,x_3)\in\Omega,\quad  p_3\not= 0  \big\},
$$
and
$$
V_3=\big\{ (0,x_2,x_3,0,0,p_3)~:~(0,x_2,x_3)\in \Omega ,\quad p_3\not= 0\big\} .  
$$
We observe that the only solutions of \eqref{eq:hs}  in the set $V_1$  are stationary points (it is a symplectic manifold).
Hence, a nontrivial lifting of a singular time-optimal trajectory can only belong  to the set $V_2\cup V_3$.
By Theorem~\ref{t:nls}, we deduce that 
$$
\sing_L T\subset \big\{ (x_1,x_2,x_3)\in \Omega~:~x_1=0\quad\text{ or }\quad x_1=2 \big\}  
$$
and the conclusion follows. 
  \end{proof}
 \appendix

\section{Proof of Theorem~\ref{t:bregularity}}\label{a:b}
\setcounter{equation}{0}
\setcounter{theorem}{0}
\setcounter{proposition}{0}  
\setcounter{lemma}{0}
\setcounter{corollary}{0} 
\setcounter{definition}{0}

The fact that $T$ is of class $C^\infty$ in a neighborhood of every non-characteristic point can be proved arguing as follows.
Let $x_0\in \Gamma \setminus E$. Then, by Theorem~\ref{t:derridj}, there exists a neighborhood of $x_0$ in $\Omega'$, $W$, such that $W\cap E=\emptyset$. Consider the Dirichlet problem
$$
h(x,Dv)=1\quad\text{ in }\quad W\cap \Omega 
$$
with initial condition $v=0$ in $\Gamma \cap W$.
This is a noncharacteristic Dirichlet problem with smooth data. Then, possibly replacing $W$ with a smaller neighborhood of $x_0$,
we have that there exists a unique solution $v\in C^\infty (W\cap \Omega)$. Since the characteristics of the above problem are time-optimal trajectories, we conclude that $T\in C^\infty (W\cap \Omega)$.

Let us now prove H\"older regularity at a characteristic boundary point. Take $x_0\in E$ and denote by $T_{\{x_0\}}$ the minimum time function for the controlled system \eqref{eq:se} with target $\{ x_0\}$. Let $W$ be a neighborhood of $x_0$ in $\Omega'$  such that
$$
\max_{ x\in W\cap \overline{\Omega}} k(x)=k(x_0). 
$$ 
We want to show that $T$ is H\"older continuous of exponent $1/k(x_0)$ at $x_0$.
It is well known~(\cite[Theorem~1.15, Chap.~IV]{BCD}) that there exists a  constant $C>0$ such that 
$$
T_{\{x_0\}} (x)\le C |x-x_0|^{ 1/k(x_0)}, \quad \forall x\in W. 
$$
By possibly taking a smaller set $W$, we may assume that 
$$
T_{\{ x_0\} }(x)\geq T(x), \quad \forall x\in \partial (W\cap \Omega )
$$
and 
$$
h(x,DT_{\{ x_0\} }(x))=1=h(x,DT(x))\quad \text{ in }\quad W\cap \Omega 
$$
(in the viscosity sense).
So, the comparison principle implies that
$$
T(x)\le T_{\{ x_0\} }(x) \quad \forall x\in W\cap \Omega .  
$$
Then, we deduce that
$$
|T(x)-T(x_0)|=T(x)\le C  |x-x_0|^{ 1/k(x_0)}, \quad \forall x\in W\cap \Omega .
$$
Let us show that, if $x_0\in E$, then $T$ cannot be Lipschitz continuous at $x_0$. We argue by contradiction and assume   $\exists\,C>0$ such that
\begin{equation}
  \label{eq:nl}
  T(x)\le C|x-x_0| \quad \forall x\in \overline{\Omega}\cap B_\delta (x_0), 
  \end{equation}
for some $\delta >0$.
Then, by Theorem~\ref{t:derridj}, there exists a sequence $x_j\in \Gamma\cap B_\delta (x_0)\setminus E$ such that $x_j\to x_0$ as $j\to \infty$. Then
$$
DT(x_j)= \langle DT(x_j), \nu(x_j)\rangle \nu(x_j). 
$$
By \eqref{eq:nl}, $\{\langle DT(x_j), \nu(x_j)\rangle\}_j$ is a bounded sequence. Hence, possibly taking a subsequence, we find that, as $j\to\infty$, 
$$
\langle DT(x_j), \nu(x_j)\rangle \to c \quad \text{ and }\quad \nu (x_j)\to \nu (x_0),  
$$
for some $c\in\R$.
Thus,  we find the contradiction
$$
1=h(x_j,DT(x_j))\; \text{ and }\; \lim_{j\to\infty} h(x_j,DT(x_j))= c^2 h(x_0,\nu (x_0))=0.
$$

\end{document}